\newcommand{\bed}{\begin{displaymath}}
\newcommand{\eed}{\end{displaymath}}
\newcommand{\bea}{\bed\begin{array}{rl}}
\newcommand{\eea}{\end{array}\eed}
\newcommand{\barray}{\begin{array}{ll}}
\newcommand{\earray}{\end{array}}
\newtheorem{theorem}{Theorem}[section]
\newtheorem{lemma}[theorem]{Lemma}
\newtheorem{remark}{Remark}[section]
\newtheorem{assumption}{Assumption}[section]
\newtheorem{rem}[theorem]{Remark}
\begin{document}

\title{Convergence and stability of two classes of theta-Milstein schemes for stochastic differential equations}

\author{Xiaofeng Zong,\thanks{Institute of Systems Science, Academy of Mathematics and Systems Science, Chinese Academy of Sciences, Beijing 100190,  People¡¯s Republic of China, xfzong87816@gmail.com.  The research of
 this author was supported in part by the National Natural Science Foundations of China (No. 11422110 and No. 61473125) }
\and Fuke Wu, \thanks{School of Mathematics and Statistics, Huazhong University of Science and
 Technology, Wuhan, Hubei 430074, P.R. China, wufuke@mail.hust.edu.cn. The research of
 this author was supported in part by the National Natural Science Foundations of China (No. 11422110 and No. 61473125).}
\and Guiping Xu, \thanks{School of Computer Science and Technology, Huazhong University of Science and Technology, Wuhan, Hubei 430074, P.R. China, gpxu@mail.hust.edu.cn. The research of this author was supported in part by the National Natural Science Foundation of China (Grant No. 61272014).}}
\maketitle

\begin{abstract}
This paper examines convergence and stability of the two classes of theta-Milstein schemes for stochastic differential equations (SDEs) with non-global Lipschitz continuous coefficients: the split-step theta-Milstein (SSTM) scheme and the stochastic theta-Milstein (STM) scheme. For $\theta\in[1/2,1]$, this paper concludes that the two classes of theta-Milstein schemes converge strongly to the exact solution with the order $1$. For $\theta\in[0,1/2]$, under the additional linear growth condition for the drift coefficient, these two classes of the theta-Milstein schemes are also strongly convergent with the standard order. This paper also investigates exponential mean-square stability of these two classes of the theta-Milstein schemes. For $\theta\in(1/2, 1]$, these two theta-Milstein  schemes can share the exponential mean-square stability of the exact solution. For $\theta\in[0, 1/2]$, similar to the convergence, under the additional linear growth condition,  these two theta-Milstein schemes can also reproduce the exponential mean-square stability of the exact solution.
\vskip 0.3 in \noindent {\bf Keywords:}
 SDEs; Strong convergence rate; Exponential mean-square stability; Stochastic theta-Milstein scheme; Split-step theta-Milstein scheme.

\end{abstract}

\newpage

\section{Introduction}
\setcounter{equation}{0}
During the last decades, SDEs have become increasingly important tools to describe the real world since stochastic models have wide applications in biological systems, neural network, financial engineering and wireless communications. Most SDEs cannot be solved explicitly, so numerical approximations become important tools to study stochastic models. When a numerical scheme is put forward, it is crucial that this numerical scheme can converge to the exact solution. Moreover, it also need to describe the asymptotic properties of the exact solution such as boundedness and stability.

Most of the existing convergence theory for numerical methods of SDEs requires the global Lipschitz condition (see \cite{KP1992, M1995, MT2004}). However, many well-known stochastic systems do not satisfy the global Lipschitz condition, for example, the stochastic Duffing-van der Pol oscillator \cite{AS2007,HJ2012}, stochastic Lorenz equation \cite{S1997Lorenz,HJ2012}, experimental psychology model \cite{HJ2012}, stochastic Ginzburg-Landau equation \cite{KP1992,HJ2012}, stochastic Lotka-Volterra equations \cite{A2003,KP1992,HJ2012} and volatility processes \cite{KP1992,HJ2012} and so on. For some SDEs without the global Lipschitz condition, the classical explicit numerical schemes may not converge to the exact solution in the strong mean-square sense (for example Euler-Maruyama (EM) scheme and Milstein scheme, please see \cite{HJ2009,  HJK2011}). Hence, numerical approximations for SDEs without the global Lipschitz condition have received more and more attention in recent years.

For the SDEs with the one-sided Lipschitz condition (which is weaker than the global Lipschitz condition) on the drift term and the global Lipschitz condition on the diffusion term, Hu \cite{H1996} examined convergence of the backward Euler-Maruyama (BEM) scheme  and obtained that the optimal rate of convergence is $0.5$. Under an additional polynomial condition, Higham et al. \cite{HMS2002,HK2009} and Bastani et al. \cite{BT2012} proved that BEM and split-step BEM schemes converge strongly to the exact solution with the optimal rate $0.5$. Mao and Szpruch \cite{MS2013BEM}  showed that under a dissipative condition on the drift coefficient and the super linear growth condition on the diffusion coefficient, the BEM scheme is convergent with strong order of a half. Under a monotone condition, strong convergence of BEM scheme and theta-EM scheme were investigated by \cite{SM2010, MS2013implicit}. Recently, there are various explicit Euler schemes with one half order were proposed to approximate the SDEs with non-global Lipschitz coefficients, see \cite{MJ2014,MJK2012,S2013,ZWH2014a} and the references therein.  However, it is still an important and difficult work to look for more appropriate conditions to obtain the strong convergence rate of the numerical schemes for SDEs without the global Lipschitz condition.

The explicit Milstein scheme for SDEs developed by Milstein \cite{M1995} can obtain a strong order of convergence higher than Euler-type schemes. Hu et al. \cite{HMY2004} extended this scheme to solve SDEs with time delay. Implicit Milstein schemes under the global Lipschitz condition were studied by Tian and Burrage \cite{TB2001}, Wang et al. \cite{WGW2012}, Alcock \& Burrage \cite{JK2012}. The tamed Milstein scheme was investigated in \cite{WG2013} for SDEs with non-global Lipschitz coefficients.  Recently, Higham et al. \cite{HMS2012} proposed a new Milstein scheme, named as ($\theta,\sigma$)-Milstein scheme, and investigated its strong convergence when SDEs hold polynomial growth for the diffusion term. However, the strong convergence rates of the implicit Milstein schemes for SDEs with non-global Lipschitz coefficients has not yet been well investigated.

Stability of numerical solutions is another central problem for numerical analysis.  The mean-square stability of numerical methods for linear stochastic differential equations have been studied by \cite{H2000a, SM1996, SM2002}. For nonlinear SDEs,  Higham et al.\cite{HMS2003}  showed that the BEM and the split-step BEM can reproduce the mean-square exponential stability of the exact solution. Recently, without the global Lipschitz, Huang \cite{H2012} and authors \cite{ZW1014,ZWH2014} presented conditions under which the stochastic theta method and split-step theta method can not only reproduce the exponential mean-square stability of the exact solution, but also preserve the bound of the Lyapunov exponent for sufficient small stepsize, which measures the decay rate of the numerical solutions. However, there is little work on the mean-square stability of the theta-Milstein schemes for SDEs with non-Lipschitz continuous coefficients.

The main aim is to examine the boundedness and the convergence rate as well as the exponential mean-square stability of theta Milstein schemes for SDEs with non-Lipschitz continuous coefficients. This paper is organized as follows. The next section presents some necessary notations and preliminaries, and then introduces the stochastic theta-Milstein scheme and the split-step theta-Milstein scheme. Section 3 establishes the uniform boundedness of the $p$th moments of the theta-Milstein schemes and shows that for $\theta\in[1/2,1]$, these two classes of theta-Milstein schemes are bounded in the sense of moment, but for $\theta\in[0,1/2]$, the linear growth condition on drift is added to obtain the moment boundedness of the theta-Milstein schemes. Section 4 proves that the theta-Milstein schemes strongly converge to the exact solution with the order $1$. The final section investigates the exponential mean-square stability of these two classes of theta-Milstein schemes.

\section{Notations and preliminaries}\label{S1}
\setcounter{equation}{0}
Throughout this paper, unless otherwise specified, we use the following notations. Let $|\cdot|$ denote both the Euclidean norm in $\mathbb{R}^n$. If $x$ is a  vector, its transpose is denoted by $x^T$ and the inner product is denoted by $\langle x, y\rangle=x^Ty$ for $x,y\in\mathbb{R}^n$. $a\vee b$ represents $\max\{a,b\}$ and $a\wedge b$ denotes $\min\{a, b\}$.  $\mathbb{N}_+$ represents the positive integer set, namely, $\mathbb{N}_+=\{1,2, 3, \ldots\}$. Let $(\Omega,\mathfrak{F},\mathbb{P})$ be a complete probability space with a filtration $\{\mathfrak{F_{t}}\}_{t\geq0}$ satisfying the usual conditions, that is, it is right continuous and increasing while $\mathfrak{F_{0}}$ contains all $\mathbb{P}$-null sets. Let $(w(t))_{t\geq0}$ be a one-dimensional Brownian motion defined on this probability space.

Let $f, g:\mathbb{R}^n\mapsto \mathbb{R}^{n}$ be Borel measurable functions. Let us consider the $n$-dimensional SDE of the form
\begin{equation}\label{SDE}
dx(t)=f(x(t))dt+g(x(t))dw(t),\quad t>0
\end{equation}
 with initial data $x(0)=x_0\in\mathbb{R}^n$.
Assume that $f$ and $g$ satisfy the following assumption:
\begin{assumption}~\label{onesided assumption}
Assume that the functions $f, g\in C^1$ and there exist constants $\mu\in\mathbb{R}$ and $c>0$ such that for any   $x, y\in \mathbb{R}^n$
\begin{equation}\label{onesidedf}
\langle x-y, f(x)-f(y)\rangle \leq \mu|x-y|^2,
\end{equation}
\begin{equation}\label{lipschitzg}
|g(x)-g(y)|^2 \leq c|x-y|^2.
\end{equation}
\end{assumption}

\begin{rem}
{\rm From conditions \eqref{onesidedf} and \eqref{lipschitzg}, we have the following monotone condition
\begin{equation}\label{monotone condition}
 \langle x, f(x)\rangle\vee|g(x)|^2\leq \alpha+\beta|x|^2, \ x\in\mathbb{R}^n,
\end{equation}
where
$$\alpha:=\frac{1}{2}|f(0)|^2\vee2|g(0)|^2 \ {\rm{and}} \ \beta:=(\mu+\frac{1}{2})\vee2c.$$
It is well-known that existence and uniqueness of the global solution of the SDE \eqref{SDE} can be guaranteed  by the local Lipschitz condition and the monotone condition (or general $LV$ condition) (see \cite{M1997}). Since $f, g\in C^1$ implies that $f$ and $g$ satisfy the local Lipschitz condition, the  monotone condition \eqref{monotone condition} can guarantee the existence and uniqueness of the gloal solution for the SDE \eqref{SDE}. This implies that Assumption \ref{onesided assumption} can guarantee the existence and uniqueness of the global solution of the SDE \eqref{SDE}.}
\end{rem}

According to the monotone condition \eqref{monotone condition}, we can present the following lemma (see Lemma 3.2 in \cite{HMS2002}).
\begin{lemma}\label{lemma exact bound}
Under Assumption \ref{onesided assumption}, for each $p\geq 2$ there is $C=C(p,T)>0$ such that
$$
\mathbb{E}\Big[\sup_{0\leq t\leq T}|x(t)|^p\Big]\leq C(1+\mathbb{E}|x_0|^p).
$$
\end{lemma}

In what follows, for the purpose of simplicity, let $C$ represent a generic positive constant independent of the stepsize $\Delta$, whose value may change with each appearance.

Fixed any time $T>0$ and given a stepsize $\Delta=T/N$ for certain integer $N$, we introduce the split-step theta-Milstein (SSTM) approximation
\begin{equation}\label{SSTM}
\left\{
\begin{array}{ll}
y_k=z_k+\theta f(y_k)\Delta, \\
z_{k+1}=z_k+ f(y_k)\Delta+g(y_k)\Delta w_k+\frac{1}{2}L^1g(y_k)(|\Delta w_k|^2-\Delta), \quad k=0, 1,2,\ldots,N.
\end{array}
\right.
\end{equation}
where $y_0=x(0)$, $z_0=y_0-\theta f(y_0)\Delta$, $\theta \in [0,1]$, $L^1=g(x)\frac{\partial}{\partial x}$ and $\Delta w_k:=w((k+1)\Delta)-w(k\Delta)$ be the Brownian increment. This scheme $\{z_k\}_{k\geq 0}$ can be considered as an extension from the split-step theta method developed by our previous work \cite{H2012}. It is interesting that the approximation $\{y_k\}_{k\geq0}$ in \eqref{SSTM} is in fact the stochastic theta-Milstein (STM) approximation
\begin{equation}\label{STM}
y_{k+1}=y_k+\theta f(y_{k+1})\Delta+(1-\theta) f(y_k)\Delta+g(y_k)\Delta w_k+\frac{1}{2}L^1g(y_k)(|\Delta w_k|^2-\Delta),
\end{equation}
which can be proved by substituting $z_k=y_k-\theta f(y_k)\Delta$ into the second equation in \eqref{SSTM}. The STM \eqref{STM}, investigated in \cite{H2000b} for linear scalar SDEs, can be consider as the special case $\sigma=0$ of the $(\theta,\sigma)$-Milstein scheme developed by Higham et al.\cite{HMS2012}. When $\theta=0$, $\{z_k\}_{k\geq 0}$ and $\{y_k\}_{k\geq 0}$ are the classical Milstein approximation proposed in Milstein \cite{M1995}. When $\theta=1$, $\{y_k\}_{k\geq 0}$ becomes the drift-implicit Milstein scheme, which was investigated in \cite{NS2012}. Since theta-Milstein schemes \eqref{SSTM} and \eqref{STM} are semi-implicit when $\theta\in(0, 1]$, to guarantee that they are well defined, we restrict the stepsize $\Delta$ satisfying $\theta\mu\Delta<1$. This, together with the one-sided Lipschitz condition \eqref{onesidedf}, the equation
$$
y=z+\theta\Delta f(y)
$$
has unique solution $y=F_{\Delta,\theta}(z)$ for any $z\in\mathbb{R}^n$.
\section{Uniform boundedness of $p$th moments}
\setcounter{equation}{0}
In order to investigate the boundedness of $p$th moments and convergence of the two classes theta-Milstein approximations, we need the following assumption, which is standard for the classical Milstein scheme.
 \begin{assumption}\label{assu further condition}
Assume that the functions $f, g\in C^2$ and there exists a constant  $\sigma$ such that for any $x,y\in\mathbb{R}^n$
\begin{equation}\label{Lg}
|L^1g(x)-L^1g(y)|^2\leq \sigma|x-y|^2.
\end{equation}
\end{assumption}

Then let us investigate the $p$th moment boundedness of the theta-Milstein approximations for $\theta\in[1/2,1]$ and $\theta\in[0,1/2)$, respectively.

\begin{theorem}\label{thbound121}
Let Assumptions \ref{onesided assumption} and \ref{assu further condition} hold, $\theta\in[1/2,1]$ and let $\Delta<\Delta^*=1/(2\theta\beta)$. Then for each $p\geq2$
  \begin{equation}\label{boundz}
   \mathbb{E}\Big[\sup_{k\Delta\in[0, T]}|z_k|^p\Big]\leq C
\end{equation}
and
\begin{equation}\label{boundy}
  \mathbb{E}\Big[\sup_{k\Delta\in[0, T]}|y_k|^p\Big]\leq C.
\end{equation}
\end{theorem}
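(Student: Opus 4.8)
The plan is to reduce the statement to a moment bound for $\{z_k\}$ and then transfer it to $\{y_k\}$. Write $M_k:=g(y_k)\Delta w_k+\tfrac12L^1g(y_k)(|\Delta w_k|^2-\Delta)$ and $\zeta_k:=z_k+f(y_k)\Delta=y_k+(1-\theta)f(y_k)\Delta$, so the SSTM step reads $z_{k+1}=\zeta_k+M_k$, with $\zeta_k$ being $\mathfrak{F}_{k\Delta}$-measurable and, since $\Delta w_k$ and $|\Delta w_k|^2-\Delta$ have conditional mean zero, $\mathbb{E}[M_k\mid\mathfrak{F}_{k\Delta}]=0$. Using $\langle z_k,f(y_k)\rangle=\langle y_k,f(y_k)\rangle-\theta\Delta|f(y_k)|^2$ one gets
\begin{equation*}
|\zeta_k|^2=|z_k|^2+2\Delta\langle y_k,f(y_k)\rangle+(1-2\theta)\Delta^2|f(y_k)|^2 .
\end{equation*}
Here is the crucial point: for $\theta\in[1/2,1]$ the coefficient $(1-2\theta)\le0$, so the superlinear term $|f(y_k)|^2$ is discarded for free, and the monotone condition \eqref{monotone condition} gives $|\zeta_k|^2\le|z_k|^2+2\Delta(\alpha+\beta|y_k|^2)$. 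Separately, from $|z_k|^2=|y_k-\theta f(y_k)\Delta|^2\ge|y_k|^2-2\theta\Delta\langle y_k,f(y_k)\rangle$ and \eqref{monotone condition} again, $(1-2\theta\beta\Delta)|y_k|^2\le|z_k|^2+2\theta\alpha\Delta$; since $\Delta<\Delta^*=1/(2\theta\beta)$ this is solvable and yields $|y_k|^2\le C(1+|z_k|^2)$, hence also $|\zeta_k|^2\le C(1+|z_k|^2)$. Finally, \eqref{lipschitzg} and Assumption~\ref{assu further condition} force $g$ and $L^1g$ to grow at most linearly, so the Gaussian moments of $\Delta w_k$ give $\mathbb{E}[|M_k|^p\mid\mathfrak{F}_{k\Delta}]\le C\Delta^{p/2}(1+|y_k|^p)$ for every $p\ge2$.

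With these ingredients I would form the one-step inequality $|z_{k+1}|^2\le|z_k|^2+2\Delta(\alpha+\beta|y_k|^2)+2\langle\zeta_k,M_k\rangle+|M_k|^2$ and telescope it to
\begin{equation*}
|z_n|^2\le|z_0|^2+2\Delta\sum_{k=0}^{n-1}(\alpha+\beta|y_k|^2)+2\sum_{k=0}^{n-1}\langle\zeta_k,M_k\rangle+\sum_{k=0}^{n-1}|M_k|^2 .
\end{equation*}
Raising to the power $p/2$ (using $(a+b)^{p/2}\le C(a^{p/2}+|b|^{p/2})$ for $a\ge0,\ a+b\ge0$), taking $\sup_{n\le m}$ and then expectations, and writing $\Phi_m:=\mathbb{E}[\sup_{0\le k\le m}|z_k|^p]$ — which is finite for each fixed $\Delta$, as an induction on $k$ using $|y_k|^2\le C(1+|z_k|^2)$ and the independence of $\Delta w_k$ from $\mathfrak{F}_{k\Delta}$ shows $z_k,y_k\in L^p$ — I would estimate the three sums as follows. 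The first two are nondecreasing in $n$, so their suprema are attained at $n=m$; the power-mean inequality together with $|y_k|^2\le C(1+|z_k|^2)$, $\mathbb{E}[|M_k|^p\mid\mathfrak{F}_{k\Delta}]\le C\Delta^{p/2}(1+|y_k|^p)$, and $\Delta n\le T$ turns each of them into $C+C\Delta\sum_{k<m}\Phi_k$. Since $\sum_{k<n}\langle\zeta_k,M_k\rangle$ is a martingale, the discrete Burkholder--Davis--Gundy inequality and $\langle\zeta_k,M_k\rangle^2\le|\zeta_k|^2|M_k|^2$ bound its contribution by $C\,\mathbb{E}[(\sup_{k<m}|\zeta_k|^2)^{p/4}(\sum_{k<m}|M_k|^2)^{p/4}]$, and Young's inequality splits this as
\begin{equation*}
\mathbb{E}\Big[\Big(\sup_{k<m}|\zeta_k|^2\Big)^{p/4}\Big(\sum_{k<m}|M_k|^2\Big)^{p/4}\Big]\le\varepsilon\,\mathbb{E}\Big[\sup_{k<m}|\zeta_k|^p\Big]+C_\varepsilon\,\mathbb{E}\Big[\Big(\sum_{k<m}|M_k|^2\Big)^{p/2}\Big]\le\varepsilon C\Phi_m+C+C\Delta\sum_{k<m}\Phi_k ,
\end{equation*}
using $|\zeta_k|^p\le C(1+|z_k|^p)$ for the first term. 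Collecting everything and choosing $\varepsilon$ small enough to absorb $\varepsilon C\Phi_m$ into the left side gives $\Phi_m\le C+C\Delta\sum_{k<m}\Phi_k$, and the discrete Gronwall inequality yields $\Phi_N\le Ce^{CT}=C$, which is \eqref{boundz}. Then \eqref{boundy} is immediate, since $\sup_k|y_k|^p\le C(1+\sup_k|z_k|^p)$ pathwise.

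I expect the main obstacle to be the interplay, for non-integer $p/2$, between the implicitness of the scheme and the martingale remainder. Two points require care. First, a naive per-step recursion $\mathbb{E}[|z_{k+1}|^p\mid\mathfrak{F}_{k\Delta}]\le(1+C\Delta)|z_k|^p+C\Delta$ does \emph{not} go through when $p\in(2,4)$, because the contribution of $\langle\zeta_k,M_k\rangle$ to the $(p/2)$-th power is only of order $\Delta^{p/4}$, not $\Delta$; this is precisely why one must keep the whole martingale $\sum_k\langle\zeta_k,M_k\rangle$ together and control it by BDG (so that its size is governed by its quadratic variation $\sim\sum_k|M_k|^2\sim T$), run the Gronwall argument directly on $\Phi_m$, and close the resulting circularity with the Young inequality above. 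Second, the restriction $\Delta<\Delta^*$ enters exactly — and only — at the step where the implicit relation $y_k=z_k+\theta f(y_k)\Delta$ is solved for $|y_k|$ in terms of $|z_k|$; and the hypothesis $\theta\ge1/2$ is used exactly — and only — to discard $(1-2\theta)\Delta^2|f(y_k)|^2$, which is what allows the whole argument to proceed without a linear growth assumption on $f$ (the case $\theta\in[0,1/2)$ requiring that extra hypothesis, as in the next theorem).
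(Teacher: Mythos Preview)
Your proposal is correct and follows essentially the same route as the paper: expand $|z_{k+1}|^2$, use $\theta\ge\tfrac12$ to discard $(1-2\theta)\Delta^2|f(y_k)|^2$, telescope, raise to a power, control the martingale part by BDG, convert $y$-moments to $z$-moments via $(1-2\theta\beta\Delta)|y_k|^2\le|z_k|^2+2\theta\alpha\Delta$ (this is exactly where $\Delta<\Delta^*$ enters), and close with discrete Gronwall. The only difference is cosmetic: the paper splits your $\zeta_k$ as $\tfrac{1}{\theta}y_k-\tfrac{1-\theta}{\theta}z_k$ and your $M_k$ into its $g$- and $L^1g$-pieces, then BDG-bounds each resulting martingale sum separately so that everything lands directly in $C+C\Delta\sum_i\mathbb{E}[\sup_{j\le i}|z_j|^{2p}]$, which spares the explicit Young/absorption step you use; your packaging is more compact and arguably cleaner.
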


\begin{proof}
By the second equation in \eqref{SSTM},
  \begin{eqnarray*}
  \nonumber  |z_{k+1}|^2 &=& |z_k|^2+|f(y_k)|^2\Delta ^2+\frac{1}{4}|L^1g(y_k)(|\Delta w_k|^2-\Delta)|^2\\
  &&+2\Delta z_k^Tf(y_k)+\langle g(y_k)\Delta w_k,L^1g(y_k)(|\Delta w_k|^2-\Delta)\rangle\\
 \nonumber  &&+2\langle z_k+f(y_k)\Delta,g(y_k)\Delta w_k+\frac{1}{2}L^1g(y_k)(|\Delta w_k|^2-\Delta)\rangle+|g(y_k)|^2|\Delta w_k|^2\\
\nonumber&=& |z_k|^2+2\Delta y_k^Tf(y_k)+2|g(y_k)|^2|\Delta w_k|^2+(1-2\theta)|f(y_k)|^2\Delta^2\\
&&+\frac{1}{2}|L^1g(y_k)|^2(|\Delta w_k|^2-\Delta)^2+ \frac{2}{\theta}\langle y_k-(1-\theta) z_k,g(y_k)\Delta w_k\rangle\\
&&+\frac{1}{\theta}\langle y_k-(1-\theta) z_k,L^1g(y_k)(|\Delta w_k|^2-\Delta)\rangle,
\end{eqnarray*}
where we used the equation $z_k =y_k-\theta f(y_k)\Delta$.
Note that $\theta\in[1/2, 1]$. By \eqref{monotone condition}, we have
  \begin{eqnarray*}
  \nonumber  |z_{k+1}|^2&\leq& |z_k|^2+2(\alpha+\beta|y_k|^2)\Delta +2|g(y_k)|^2|\Delta w_k|^2+\frac{1}{2}|L^1g(y_k)|^2(|\Delta w_k|^2-\Delta)^2\\
\nonumber&&+ \frac{2}{\theta}\langle y_k,g(y_k)\Delta w_k\rangle-2\frac{1-\theta}{\theta} \langle z_k,g(y_k)\Delta w_k\rangle+\frac{1}{\theta}\langle y_k,L^1g(y_k)(|\Delta w_k|^2-\Delta)\rangle\\
&&-\frac{1-\theta}{\theta} \langle z_k,L^1g(y_k)(|\Delta w_k|^2-\Delta)\rangle,
\end{eqnarray*}
which implies
  \begin{eqnarray*}
  \nonumber  |z_{k+1}|^2&\leq& |z_0|^2+2\alpha T+\beta\Delta\sum_{i=0}^k|y_i|^2 +2\sum_{i=0}^k|g(y_i)|^2|\Delta w_i|^2\\
\nonumber&&+\frac{1}{\theta}\sum_{i=0}^k\langle y_i,L^1g(y_i)(|\Delta w_i|^2-\Delta)\rangle+ \frac{2}{\theta}\sum_{i=0}^k\langle y_i,g(y_i)\Delta w_i\rangle\\
&&-2\frac{1-\theta}{\theta} \sum_{i=0}^k\langle z_i,g(y_i)\Delta w_i\rangle+\frac{1}{2}\sum_{i=0}^k|L^1g(y_i)|^2(|\Delta w_i|^2-\Delta)^2\\
&&-\frac{1-\theta}{\theta} \sum_{i=0}^k\langle z_i,L^1g(y_i)(|\Delta w_i|^2-\Delta)\rangle.
\end{eqnarray*}
Recall the elementary inequality: for $x_1, \cdots, x_l\geq0$, $p\geq1$, $l=1,2, ..., N$,
$$
\Big(\sum_{i=1}^lx_i\Big)^{p}\leq l^{p-1}\sum_{i=1}^l x_i^{p}.
$$
We therefore have
\begin{eqnarray}\label{mainap}
  \nonumber \frac{1}{8^{p-1}}|z_{k+1}|^{2p} &\leq& (|z_0|^2+2\alpha T)^p+\beta^p\Delta^p\Big(\sum_{i=0}^k|y_i|^2 \Big)^p +2^p\Big(\sum_{i=0}^k|g(y_i)|^2|\Delta w_i|^2\Big)^p\\
  \nonumber &&+2^p\Big|\sum_{i=0}^k\langle y_i,L^1g(y_i)(|\Delta w_i|^2-\Delta)\rangle\Big|^p+ 4^p\Big|\sum_{i=0}^k\langle y_i,g(y_i)\Delta w_i\rangle\Big|^p\\
\nonumber&&+2^p\Big|\sum_{i=0}^k\langle z_i,g(y_i)\Delta w_i\rangle\Big|^p+2^{-p}\Big(\sum_{i=0}^k|L^1g(y_i)|^2(|\Delta w_i|^2-\Delta)^2\Big)^p\\
&&+\Big| \sum_{i=0}^k\langle z_i,L^1g(y_i)(|\Delta w_i|^2-\Delta)\rangle\Big|^p.
\end{eqnarray}
Note that $y_i$ is $\mathfrak{F}_i$- measurable while $\Delta w_i$ is independent of $\mathfrak{F}_i$. Hence, for any integer $m<N$,
\begin{eqnarray}\label{eq2}
  \nonumber \mathbb{E}\Big(\sup_{0\leq k\leq m}\sum_{i=0}^k|g(y_i)|^2|\Delta w_i|^2\Big)^p&\leq&N^{p-1}\mathbb{E}\Big[\sup_{0\leq k\leq m}\sum_{i=0}^k|g(y_i)\Delta w_i|^{2p}\Big]\\
  \nonumber &=& N^{p-1}\mathbb{E}\Big[\sum_{i=0}^m|g(y_i)\Delta w_i|^{2p}\Big]\\
  \nonumber &=& N^{p-1}\sum_{i=0}^m\mathbb{E}|g(y_i)|^{2p}\mathbb{E}|\Delta w_i|^{2p}\\
  \nonumber &\leq& C\Delta\sum_{i=0}^m\mathbb{E}[\alpha+\beta|y_i|^{2}]^p\\
 \nonumber &\leq& C\Delta\sum_{i=0}^m\mathbb{E}[\alpha^p+\beta^p|y_i|^{2p}]\\
   &\leq& C+C\Delta\sum_{i=0}^m\mathbb{E}[|y_i|^{2p}],
\end{eqnarray}
where we also used \eqref{monotone condition}. Using the Burkholder-Davis-Gundy inequality, we have
\begin{eqnarray}\label{eq3}
 \nonumber  \mathbb{E}\Big[\sup_{0\leq k\leq m}\Big|\sum_{i=0}^ky_i^TL^1g(y_i)(|\Delta w_i|^2-\Delta)\rangle\Big|^p\Big] &\leq& C \mathbb{E}\Big[\sum_{i=0}^m|y_i|^2|L^1g(y_i)|^2\Delta^2\Big]^{p/2}\\
 \nonumber  &\leq& C\Delta^p(m+1)^{p/2-1}\sum_{i=0}^m\mathbb{E}[|y_i|^2|L^1g(0)|^2+\sigma|y_i|^{2}]^{p/2}\\
   &\leq& C\Delta^{p/2}+C\Delta^{p/2+1}\sum_{i=0}^m\mathbb{E}[|y_i|^{2p}],
\end{eqnarray}
\begin{eqnarray}\label{eq4}
  \nonumber \mathbb{E}\Big[\sup_{0\leq k\leq m}\Big|\sum_{i=0}^ky_i^Tg(y_i)\Delta w_i\Big|^p\Big] &\leq& C \mathbb{E}\Big[\sum_{i=0}^m|y_i|^2|g(y_i)|^2\Delta\Big]^{p/2}\\
  \nonumber &\leq& C\Delta^{p/2}(m+1)^{p/2-1}\mathbb{E}\sum_{i=0}^m|y_i|^p[\alpha+\beta|y_i|^{2}]^{p/2}\\
 &\leq&C+ C\Delta\sum_{i=0}^m[1+\mathbb{E}[|y_i|^{2p}],
\end{eqnarray}
\begin{eqnarray}\label{eq5}
  \nonumber \mathbb{E}\Big[\sup_{0\leq k\leq m}\Big|\sum_{i=0}^kz_i^Tg(y_i)\Delta w_i\Big|^p\Big] &\leq& C \mathbb{E}\Big[\sum_{i=0}^m|z_i|^2|g(y_i)|^2\Delta\Big]^{p/2}\\
  \nonumber &\leq& C\Delta^{p/2}(m+1)^{p/2-1}\mathbb{E}\sum_{i=0}^m|z_i|^p[\alpha+\beta|y_i|^{2}]^{p/2}\\
 &\leq& C\Delta\sum_{i=0}^m[1+\mathbb{E}[|y_i|^{2p}]+C\Delta\sum_{i=0}^m\mathbb{E}[|z_i|^{2p}],
\end{eqnarray}
and
\begin{eqnarray}\label{eq6}
 \nonumber  \mathbb{E}\Big[\sup_{0\leq k\leq m}\Big|\sum_{i=0}^kz_i^TL^1g(y_i)(|\Delta w_i|^2-\Delta)\rangle\Big|^p\Big] &\leq& C \mathbb{E}\Big[\sum_{i=0}^m|z_i|^2|L^1g(y_i)|^2\Delta^2\Big]^{p/2}\\
 \nonumber  &\leq& C\Delta^p(m+1)^{p/2-1}\sum_{i=0}^m\mathbb{E}[|z_i|^2|L^1g(0)|^2+\sigma|y_i|^{2}]^{p/2}\\
   &\leq& C+C\Delta\sum_{i=0}^m\mathbb{E}[|y_i|^{2p}]+C\Delta\sum_{i=0}^m\mathbb{E}[|z_i|^{2p}].
\end{eqnarray}
Similarly, we have
\begin{eqnarray}\label{eq7}
  \nonumber\mathbb{E}\Big(\sup_{0\leq k\leq m}\sum_{i=0}^k|L^1g(y_i)|^2(|\Delta w_i|^2-\Delta)^2\Big)^p&\leq&  N^{p-1}\mathbb{E}\Big[\sup_{0\leq k\leq m}\sum_{i=0}^k|L^1g(y_i)(|\Delta w_i|^2-\Delta)|^{2p}\Big] \\
  \nonumber &=&  N^{p-1}\mathbb{E}\Big[\sum_{i=0}^m|L^1g(y_i)(|\Delta w_i|^2-\Delta)|^{2p}\Big]\\
  \nonumber &\leq&N^{p-1}\sum_{i=0}^m\mathbb{E}|L^1g(y_i)|^{2p}\mathbb{E}|(|\Delta w_i|^2-\Delta)|^{2p}\\
  \nonumber &=& C\Delta\sum_{i=0}^m\mathbb{E}[|L^1g(0)|^2+\sigma|y_i|^{2}]^p\\
  \nonumber&\leq& C\Delta\sum_{i=0}^m\mathbb{E}[|L^1g(0)|^{2p}+\sigma|y_i|^{2p}]\\
   &\leq& C+C\Delta\sum_{i=0}^m\mathbb{E}[|y_i|^{2p}].
\end{eqnarray}
Combining  \eqref{eq2}-\eqref{eq7} with \eqref{mainap}  yields
\begin{eqnarray}\label{mainap1}
  \nonumber\mathbb{E}\Big[\sup_{0\leq k\leq m+1}|z_{k}|^{2p} \Big]&\leq& C+C\Delta\sum_{i=0}^m\mathbb{E}[|y_i|^{2p}]+C\Delta\sum_{i=0}^m\mathbb{E}[|y_i|^{2p}]\\
  &\leq& C+C\Delta\sum_{i=0}^m\mathbb{E}\Big[\sup_{0\leq k\leq i}|y_k|^{2p}\Big]+C\Delta\sum_{i=0}^m\mathbb{E}\Big[\sup_{0\leq k\leq i}|z_k|^{2p}\Big].
\end{eqnarray}
Using $z_k=y_k-\theta f(y_k)\Delta$ and \eqref{monotone condition}, we have
\begin{eqnarray*}
  |z_k|^2 &=& |y_k|^2-2\theta y_k^Tf(y_k)\Delta+\theta^2\Delta^2 |f(y_k)|^2 \\
  &\geq&  (1-2\theta\Delta\beta)|y_k|^2-2\theta\Delta \alpha,
\end{eqnarray*}
that is,
\begin{eqnarray}\label{1222}
 (1-2\theta\Delta\beta)|y_k|^2\leq |z_k|^2 +2\theta\Delta \alpha,
\end{eqnarray}
which together with \eqref{mainap1} implies that for $\Delta<\Delta^*$
\begin{eqnarray}\label{mainap2}
  \nonumber\mathbb{E}\Big[\sup_{0\leq k\leq m+1}|z_{k}|^{2p} \Big]\leq C+C\Delta\sum_{i=0}^m\mathbb{E}\Big[\sup_{0\leq k\leq i}|z_k|^{2p}\Big].
\end{eqnarray}
Using the discrete-type Gronwall inequality and noting that $(m+1)\Delta\leq T$ give
\begin{eqnarray}\label{mainap3}
 \mathbb{E}\Big[\sup_{0\leq k\leq m+1}|z_{k}|^{2p} \Big]\leq C.
\end{eqnarray}
This together with \eqref{1222} gives the desired assertion \eqref{boundy}.
\end{proof}

The following theorem gives the moment boundedness for $\theta\in[0,1/2)$.
\begin{theorem}\label{th012}
Let Assumptions \ref{onesided assumption} and \ref{assu further condition} hold, $\theta\in[0,1/2)$ and let $\Delta<\Delta_1=1/(2\theta\beta)$ $(\Delta_1=\infty$ if $\theta=0)$. If function $f$ satisfies the linear growth condition
  \begin{equation}\label{f linear growth}
   |f(x)|^2\leq K(1+|x|^2),
  \end{equation}
 then for each $p\geq2$
\begin{equation}\label{boundy1}
  \mathbb{E}\Big[\sup_{k\Delta\in[0, T]}|y_k|^p\Big]\leq C
\end{equation}
and
\begin{equation}\label{boundz1}
   \mathbb{E}\Big[\sup_{k\Delta\in[0, T]}|z_k|^p\Big]\leq C.
\end{equation}
\end{theorem}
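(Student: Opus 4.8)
The plan is to follow the proof of \thmref{thbound121} almost line for line; the only new ingredient is the control of the single term that there was discarded by a favourable sign but is now positive because $1-2\theta>0$, and handling it is precisely the role of the additional linear growth hypothesis \eqref{f linear growth}.

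I would first reproduce the expansion of $|z_{k+1}|^2$ coming from the second equation of \eqref{SSTM} together with the substitution $z_k=y_k-\theta f(y_k)\Delta$, namely
\[
|z_{k+1}|^2=|z_k|^2+2\Delta\, y_k^Tf(y_k)+2|g(y_k)|^2|\Delta w_k|^2+(1-2\theta)|f(y_k)|^2\Delta^2+\frac{1}{2}|L^1g(y_k)|^2(|\Delta w_k|^2-\Delta)^2+M_k,
\]
where $M_k$ collects the four $\Delta w_k$-increment terms treated in \eqref{eq3}--\eqref{eq6}. The monotone condition \eqref{monotone condition} handles $2\Delta\, y_k^Tf(y_k)\le2(\alpha+\beta|y_k|^2)\Delta$ exactly as before. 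Since $1-2\theta>0$ the term $(1-2\theta)|f(y_k)|^2\Delta^2$ can no longer be dropped, so I would bound it by \eqref{f linear growth} as $(1-2\theta)K(1+|y_k|^2)\Delta^2$; because $\Delta<\Delta_1$ is bounded (and $\Delta\le T$ already suffices when $\theta=0$), summation over $i=0,\dots,k$ gives $\sum_i(1-2\theta)K(1+|y_i|^2)\Delta^2\le C+C\Delta\sum_i|y_i|^2$, so this term is absorbed into exactly the bookkeeping sum that already carries the drift contribution.

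From here I would follow \thmref{thbound121} verbatim: sum over $i=0,\dots,k$, raise to the $p$th power using $(\sum_{i=1}^lx_i)^p\le l^{p-1}\sum_{i=1}^lx_i^p$, then take $\sup_{0\le k\le m+1}$ and expectation. The two quadratic-variation terms are estimated exactly as in \eqref{eq2} and \eqref{eq7}, using that $\Delta w_i$ is independent of $\mathfrak{F}_i$ with $\mathbb{E}|\Delta w_i|^{2p}=C\Delta^p$ and $\mathbb{E}(|\Delta w_i|^2-\Delta)^{2p}=C\Delta^{2p}$, together with \eqref{monotone condition} and \eqref{Lg}; the four terms of $M_k$ are estimated by the Burkholder--Davis--Gundy inequality exactly as in \eqref{eq3}--\eqref{eq6}. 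This yields $\mathbb{E}[\sup_{0\le k\le m+1}|z_k|^{2p}]\le C+C\Delta\sum_{i=0}^m\mathbb{E}[\sup_{0\le k\le i}|y_k|^{2p}]+C\Delta\sum_{i=0}^m\mathbb{E}[\sup_{0\le k\le i}|z_k|^{2p}]$; then $(1-2\theta\Delta\beta)|y_k|^2\le|z_k|^2+2\theta\Delta\alpha$ (valid since $1-2\theta\Delta\beta>0$ for $\Delta<\Delta_1$) removes the $|y_k|^{2p}$-sum, and the discrete Gronwall inequality with $(m+1)\Delta\le T$ gives $\mathbb{E}[\sup_{0\le k\le m+1}|z_k|^{2p}]\le C$, i.e.\ \eqref{boundz1} (for exponents in $[2,4)$ via Lyapunov's inequality). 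The same inequality --- or simply $|y_k|\le|z_k|+\theta\Delta|f(y_k)|$ combined with \eqref{f linear growth} --- then delivers \eqref{boundy1}. One may equally well run the whole estimate directly on $y_k$ via the STM form \eqref{STM}, which gives
\[
(1-2\theta\Delta\beta)|y_{k+1}|^2\le2\theta\Delta\alpha+\Big|y_k+(1-\theta)f(y_k)\Delta+g(y_k)\Delta w_k+\frac{1}{2}L^1g(y_k)(|\Delta w_k|^2-\Delta)\Big|^2,
\]
and then proceeding as above; the two routes are interchangeable.

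The computation presents no genuine obstacle beyond \thmref{thbound121}. The single point that must be watched --- and the reason \eqref{f linear growth} is imposed precisely for $\theta\in[0,1/2)$ and not for $\theta\in[1/2,1]$ --- is the now-positive contribution $(1-2\theta)|f(y_k)|^2\Delta^2$: without a growth restriction on $f$ its summed contribution $\sum_i|f(y_i)|^2\Delta^2$ need not be of the lower order $O\big(\Delta(1+\sup_k|y_k|^2)\big)$ required to close the Gronwall loop, whereas \eqref{f linear growth} makes it exactly so. The only other, routine, obligation is to keep every constant produced along the way --- by the linear growth bound, by $\Delta<\Delta_1$, and by the factor $1/(1-2\theta\Delta\beta)$ --- independent of the stepsize, which for a moment-boundedness statement one may always arrange, e.g.\ by restricting without loss of generality to $\Delta\le\Delta_1/2$.
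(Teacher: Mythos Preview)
Your approach is correct and close in spirit to the paper's, but the paper organises the expansion a little differently. Rather than recycle the identity from \thmref{thbound121}, the paper writes $|z_{k+1}|^2=|z_k|^2+2\langle z_k,\mathrm{incr}\rangle+|\mathrm{incr}|^2$ with $\mathrm{incr}=f(y_k)\Delta+g(y_k)\Delta w_k+\tfrac12 L^1g(y_k)(|\Delta w_k|^2-\Delta)$, substitutes $z_k=y_k-\theta f(y_k)\Delta$ only in the linear term, and bounds the quadratic part crudely by $\theta^2\Delta^2|f(y_k)|^2+2|\mathrm{incr}|^2\le 7\Delta^2|f(y_k)|^2+6|g(y_k)\Delta w_k|^2+\tfrac32|L^1g(y_k)(|\Delta w_k|^2-\Delta)|^2$. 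This produces only the two $y_k$-type martingale sums of \eqref{eq3}--\eqref{eq4} and avoids \eqref{eq5}--\eqref{eq6} altogether; the linear growth hypothesis \eqref{f linear growth} is then invoked on the resulting $7\Delta^2|f(y_k)|^2$ contribution, after which \eqref{eq2}--\eqref{eq4}, \eqref{eq7}, the relation \eqref{1222} and discrete Gronwall finish exactly as you describe.

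One small point to watch in your version: the four martingale terms in \thmref{thbound121} carry prefactors $2/\theta$ and $(1-\theta)/\theta$ coming from rewriting $z_k+f(y_k)\Delta=(y_k-(1-\theta)z_k)/\theta$, so the literal appeal to \eqref{eq5}--\eqref{eq6} breaks down at $\theta=0$. This is harmless once you note that $\tfrac{1}{\theta}(y_k-(1-\theta)z_k)=y_k+(1-\theta)f(y_k)\Delta$ is well defined for all $\theta\in[0,1]$, and the $f$-part of the martingale integrands can itself be bounded via \eqref{f linear growth}; the paper's cruder expansion simply sidesteps this. Either route closes the Gronwall loop in the same way.
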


\begin{proof}
By \eqref{SSTM},
\begin{eqnarray}\label{main4}
  \nonumber  |z_{k+1}|^2 &=& |z_k|^2+2\langle z_k,f(y_k)\Delta+g(y_k)\Delta w_k+\frac{1}{2}L^1g(y_k)(|\Delta w_k|^2-\Delta)\rangle\\
  &&+\Big|f(y_k)\Delta+g(y_k)\Delta w_k+\frac{1}{2}L^1g(y_k)(|\Delta w_k|^2-\Delta)\Big|^2.
\end{eqnarray}
 Note that
 \begin{equation}\label{z=y}
   z_k=y_k-\theta f(y_k)\Delta.
 \end{equation}
 Substituting this into \eqref{main4} produces
 \begin{eqnarray*}
  \nonumber |z_{k+1}|^2 &\leq& |z_k|^2+2\langle y_k,f(y_k)\Delta+g(y_k)\Delta w_k+\frac{1}{2}L^1g(y_k)(|\Delta w_k|^2-\Delta)\rangle\\
 \nonumber &&+\theta^2\Delta^2|f(y_k)|^2+2|f(y_k)\Delta+g(y_k)\Delta w_k+\frac{1}{2}L^1g(y_k)(|\Delta w_k|^2-\Delta)|^2\\
 \nonumber &\leq&|z_k|^2+2y_k^Tf(y_k)\Delta+2y_k^Tg(y_k)\Delta w_k+y_k^TL^1g(y_k)(|\Delta w_k|^2-\Delta)\rangle\\
 \nonumber &&+7\Delta^2|f(y_k)|^2+6|g(y_k)\Delta w_k|^2+3|L^1g(y_k)(|\Delta w_k|^2-\Delta)|^2\\
 \nonumber &\leq & |z_k|^2+(2\alpha+7|f(0)|^2\Delta)\Delta+(2\beta+7K\Delta)|y_k|^2\Delta\\
 \nonumber &&+2y_k^Tg(y_k)\Delta w_k+y_k^TL^1g(y_k)(|\Delta w_k|^2-\Delta)\rangle\\
  &&+6|g(y_k)\Delta w_k|^2+3|L^1g(y_k)(|\Delta w_k|^2-\Delta)|^2,
\end{eqnarray*}
which implies
\begin{eqnarray}\label{mainoo}
  \nonumber |z_{k+1}|^2 &\leq & |z_0|^2+(2\alpha+7|f(0)|^2\Delta)T+(2\beta+7K\Delta)\Delta\sum_{i=0}^k|y_i|^2\\
 \nonumber &&+2\sum_{i=0}^ky_i^Tg(y_i)\Delta w_i+\sum_{i=0}^ky_i^TL^1g(y_i)(|\Delta w_i|^2-\Delta)\rangle\\
  &&+6\sum_{i=0}^k|g(y_i)\Delta w_i|^2+3\sum_{i=0}^k|L^1g(y_i)(|\Delta w_i|^2-\Delta)|^2.
\end{eqnarray}
Similar to \eqref{mainap}, we have
\begin{eqnarray}\label{mainp}
  \nonumber \frac{1}{6^{p-1}}|z_{k+1}|^{2p} &\leq & (|z_0|^2+(2\alpha+7|f(0)|^2\Delta)T)^p+(2\beta+7K\Delta)^p\Delta^p\Big(\sum_{i=0}^k|y_i|^2\Big)^p\\
 \nonumber &&+2^p\Big|\sum_{i=0}^ky_i^Tg(y_i)\Delta w_i\Big|^p+\Big|\sum_{i=0}^ky_i^TL^1g(y_i)(|\Delta w_i|^2-\Delta)\rangle\Big|^p\\
 &&+6^p\Big(\sum_{i=0}^k|g(y_i)\Delta w_i|^2\Big)^p+3^p\Big(\sum_{i=0}^k|L^1g(y_i)(|\Delta w_i|^2-\Delta)|^2\Big)^p.
\end{eqnarray}
Combining \eqref{eq2}-\eqref{eq4}, \eqref{eq7} with \eqref{mainp} yields
\begin{eqnarray}\label{mainp1}
  \nonumber\mathbb{E}\Big[\sup_{0\leq k\leq m+1}|z_{k}|^{2p} \Big]\leq C+C\Delta\sum_{i=0}^m\mathbb{E}[|y_i|^{2p}],
\end{eqnarray}
which together with \eqref{1222} implies
\begin{eqnarray}\label{mainp2}
  \nonumber\mathbb{E}\Big[\sup_{0\leq k\leq m+1}|z_{k}|^{2p} \Big]\leq C+C\Delta\sum_{i=0}^m\mathbb{E}[|y_i|^{2p}]\leq C+C\Delta\sum_{i=0}^m\mathbb{E}[\sup_{0\leq k\leq i}|z_k|^{2p}].
\end{eqnarray}
Note that $(m+1)\Delta\leq T$. Using the discrete-type Gronwall inequality gives the desired assertion \eqref{boundz1}.
Similarly, using \eqref{1222} and \eqref{boundz1} gives another desired assertion \eqref{boundy1}.
\end{proof}

\begin{remark}
{\rm For $\theta\in[0,1/2)$, the linear growth condition \eqref{f linear growth} may be necessary for guaranteeing the moment boundedness of the theta-Milstein schemes. This fact can be found in \cite[Lemma 3.2]{ZW1014} and \cite{HJ2009,HJK2011}.}
\end{remark}

\section{Convergence of the theta-Milstein approximations}
\setcounter{equation}{0}
Let us now introduce appropriate continuous-time interpolations corresponding to the discrete numerical approximations. More accurately, let us define the continuous solutions $z(t)$ and $y(t)$ for $t\in[t_k,t_{k+1})$ as follows
 \begin{equation}\label{continuous zk}
  \left\{
    \begin{array}{ll}
      z(t)=\displaystyle z(t_k)+(t-t_k)f(y_k)+g(y_k)(w(t)-w(t_k))+\frac{1}{2}L^1g(y_k)(|w(t)-w(t_k)|^2-(t-t_k)),\\
      y(t)=F_{\Delta,\theta}(z(t)),
    \end{array}
  \right.
  \end{equation}
with $z(0)=z_0=y_0-\theta\Delta f(y_0)$, where $t_k=k\Delta$. Note that $z(t)=y(t)-\theta f(y(t))\Delta$. Then the continuous-time approximations $z(t)$ and $y(t)$ are $\mathfrak{F}_t$- measurable. In our analysis it will be more natural to work with the equivalent definition of $z(t)$
\begin{equation}\label{continuous integral zk}
z(t)=z(0)+\int_0^tf(y(\check{s}))ds+\int_0^t g(y(\check{s}))dw(s)+\int_0^t\int_{\check{s}}^sL^1g(y(\check{u}))dw(u)dw(s),
\end{equation}
where $\check{s}=t_k$ for $s\in[t_k,t_{k+1})$. Note that $z(t_k)=z_k$ and $y(t_k)=y_k$. We refer to $z(t)$ and $y(t)$ as the continuous-time extensions of the discrete approximations $z_k$ and $y_k$, respectively.

In what follows, we also use Taylor's formula frequently. If a function $h:\mathbb{R}^n\rightarrow\mathbb{R}^n$ is twice differentiable, the following Taylor's formula holds
\begin{equation}\label{taylor1}
  h(z(s))-h(z(\check{s}))=h'(z(\breve{s}))(z(s)-z(\check{s}))+\bar{R}_s(h),
\end{equation}
where $\bar{R}_s(h)$ is the remainder term defined by
\begin{equation}\label{barR}
  \bar{R}_s(h)=\int_0^1(1-r)h''(z(\check{s})+r(z(s)-z(\check{s})))(z(s)-z(\check{s}),z(s)-z(\check{s}))dr.
\end{equation}
Here for any $u,v\in\mathbb{R}^n$ the derivatives have the following expression
$$
h'(\cdot)(u)=\sum_{i=1}^n\frac{\partial h}{\partial x^i} u^i, \ h''(\cdot)(u,v)=\sum_{i,j=1}^n\frac{\partial^2 h}{\partial x^i\partial x^j} u^i v^j.
$$
Replacing $z(s)-z(\check{s})$ in \eqref{taylor1} with \eqref{continuous zk} and rearranging lead to
\begin{equation}\label{taylor2}
  h(z(s))-h(z(\check{s}))=h'(z(\breve{s}))\Big(\int_{\check{s}}^sg(y(\check{s}))dw(s)\Big)+R_s(h),
\end{equation}
where
$$
R_s(h)=h'(z(\breve{s}))\Big((s-\check{s})f(y(\check{s}))+\frac{1}{2}L^1g(y(\check{s}))(|w(s)-w(\check{s})|^2-(s-\check{s}))\Big)+\bar{R}_s(h).
$$

In order to obtain the strong convergent rate, we also need the following assumption.
\begin{assumption}\label{assu daoshu}
  There exist positive constants $D$ and $q$ such that for all $x\in\mathbb{R}^n$
  \begin{equation}\label{f12}
    |f'(x)|\vee|f''(x)|\leq D(1+|x|^q)
  \end{equation}
  and
   \begin{equation}\label{g2}
     |g''(x)|\leq D.
   \end{equation}
\end{assumption}

Let us firstly give the convergence theorem of the theta-Milstein schemes for $\theta\in[1/2,1]$ as below.
\begin{theorem}\label{th21}
Let Assumptions \ref{onesided assumption}, \ref{assu further condition} and \ref{assu daoshu} hold, $\theta\in(1/2,1]$ and let $\Delta<\Delta^*=1/(2\theta\beta)$. Then for any $p\geq 2$,
  \begin{equation}\label{convergencesst}
   \mathbb{E}\Big[\sup_{t\in[0,T]}|x(t)-z(t)|^p\Big]\leq C\Delta^p
  \end{equation}
  and
  \begin{equation}\label{convergencestm}
   \mathbb{E}\Big[\sup_{t\in[0,T]}|x(t)-y(t)|^p\Big]\leq C\Delta^p.
  \end{equation}
\end{theorem}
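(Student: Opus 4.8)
The plan is to prove \eqref{convergencesst} first, deriving the error equation for $e(t) := x(t) - z(t)$, and then obtain \eqref{convergencestm} from the relation $z(t) = y(t) - \theta f(y(t))\Delta$. Since $\theta\mu\Delta<1$ and $f$ satisfies the one-sided Lipschitz condition, the map $F_{\Delta,\theta}$ is Lipschitz with a constant $1/(1-\theta\mu\Delta)^{+}$ (or $1/(1-\theta|\mu|\Delta)$ when $\mu<0$), which is $1+\bigO(\Delta)$; hence $|x(t)-y(t)| \le |x(t)-z(t)| + \theta\Delta|f(y(t))-f(x(t))| + \theta\Delta|f(x(t))|$, and after absorbing the $f(y)-f(x)$ term using \eqref{onesidedf} applied with $z$ versus $x$, together with the moment bounds from Theorems \ref{thbound121} and \ref{th012} and Lemma \ref{lemma exact bound}, \eqref{convergencestm} follows from \eqref{convergencesst} at the cost of an extra $C\Delta^p$. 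So the real work is \eqref{convergencesst}.

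For that, I would compare the integral representation \eqref{continuous integral zk} of $z(t)$ against $x(t) = x_0 + \int_0^t f(x(s))\,ds + \int_0^t g(x(s))\,dw(s)$, writing
\[
e(t) = e_0 + \int_0^t \big(f(x(s)) - f(y(\check s))\big)\,ds + \int_0^t \big(g(x(s)) - g(y(\check s))\big)\,dw(s) - \int_0^t\!\!\int_{\check s}^s L^1 g(y(\check u))\,dw(u)\,dw(s),
\]
with $|e_0| = \theta\Delta|f(y_0)| = \bigO(\Delta)$. The Milstein double-integral term is designed precisely to cancel the leading-order part of $g(x(s)) - g(y(\check s))$: by the Taylor expansion \eqref{taylor2} applied to $h = g$, $g(y(s)) - g(y(\check s)) = g'(y(\breve s))\int_{\check s}^s g(y(\check s))\,dw(r) + R_s(g)$, so inserting and using $L^1 g(x) = g(x)g'(x)$, the double-integral term matches $\int_0^t\int_{\check s}^s g'(y(\check u))g(y(\check u))\,dw(u)\,dw(s)$ up to differences $L^1g(y(\check u)) - L^1g(y(\breve u))$, which are $\bigO(\Delta^{1/2})$ inside a double Itô integral and hence $\bigO(\Delta)$ after taking $L^p$ norms. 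The standard device is then to split $e(t)$ as $e(t) = \tilde e(t) + $ (higher-order remainder), where $\tilde e(t)$ retains only the genuinely "error-propagating" terms $f(x(s))-f(x(\check s))$, $g(x(s))-g(x(\check s))$ driving a Gronwall argument, and all terms involving $y(\check s)$ versus $x(\check s)$ or $x(\check s)$ versus $x(s)$ get estimated by one-step bounds of order $\Delta$ (or $\Delta^{1/2}$ in a martingale, upgraded to $\Delta$ by BDG). Apply Itô's formula to $|\tilde e(t)|^{2}$, use the one-sided Lipschitz condition \eqref{onesidedf} on the drift difference and \eqref{lipschitzg} on the diffusion difference to get $d|\tilde e(t)|^2 \le (2\mu+c)|\tilde e(t)|^2\,dt + (\text{martingale}) + (\bigO(\Delta)\text{ terms})$, then raise to the $p$th power, take $\sup$ and expectation, apply BDG to the martingale part, and close with the discrete/continuous Gronwall inequality to obtain $\mathbb{E}[\sup_{[0,T]}|\tilde e(t)|^p] \le C\Delta^p$.

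The main obstacle is controlling the remainder terms $R_s(f)$ and $R_s(g)$ (and the pieces $f(y(\check s)) - f(x(\check s))$, $f(x(\check s)) - f(x(s))$) in $L^p$ uniformly in $\Delta$: these involve $f'$, $f''$ with polynomial growth \eqref{f12}, evaluated at the intermediate point $z(\breve s)$, so one needs $\mathbb{E}\big[\sup |z(t)|^{p'}\big] < \infty$ for sufficiently high $p'$ to absorb the polynomial factors via Hölder — this is exactly why Theorems \ref{thbound121}/\ref{th012} are stated for all $p\ge 2$. One also needs a one-step moment estimate $\mathbb{E}|z(t) - z(\check s)|^{p'} \le C\Delta^{p'/2}$ and, for the $f$-related Taylor remainders where $(s-\check s)$ provides an extra $\Delta$, that the quadratic-in-increment remainder $\bar R_s(\cdot)$ contributes $\bigO(\Delta)$ after integrating $ds$ because $|z(s)-z(\check s)|^2$ has expectation $\bigO(\Delta)$. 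Assembling these bounds carefully — keeping track of which factors of $\Delta$ come from $ds$ integration, from BDG on single integrals, and from the double stochastic integral — and verifying that every polynomial moment invoked is finite is the bulk of the argument; once that bookkeeping is done, the Gronwall step is routine. (The restriction $\theta\in(1/2,1]$ rather than $[1/2,1]$ enters only through the $\Delta<\Delta^*$ moment-bound hypothesis of Theorem \ref{thbound121}, which is available here.)
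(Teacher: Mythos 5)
Your overall architecture coincides with the paper's: the same error process $e(t)=x(t)-z(t)$ built from \eqref{continuous integral zk}, the same cancellation of the leading diffusion increment by the Milstein double integral (the paper realizes this by applying It\^{o}'s formula to $g(x(\cdot))$ rather than Taylor-expanding $g$ along the numerical path, but that is cosmetic), the same use of \eqref{onesidedf} to produce the Gronwall term $2\mu\int_0^t|e(s)|^2\,ds$, the same reliance on the $p$th-moment bounds, one-step estimates and remainder bounds $\mathbb{E}|R_s(f)|^p\leq C\Delta^p$, and the same passage from \eqref{convergencesst} to \eqref{convergencestm} via $z=y-\theta\Delta f(y)$.

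The gap is in the one sentence where you dispose of the drift one-step increment. After inserting \eqref{taylor2} for $f$, the term that decides whether the rate is $1$ or $1/2$ is
\[
\int_0^t\Big\langle e(s),\,f'(z(\check{s}))\Big(\int_{\check{s}}^s g(y(\check{s}))\,dw(u)\Big)\Big\rangle\, ds,
\]
and this is a Lebesgue integral in $s$, not a martingale, so ``$\Delta^{1/2}$ in a martingale, upgraded to $\Delta$ by BDG'' does not apply to it as written; Cauchy--Schwarz gives only $\int_0^t|e(s)|^2ds+O(\Delta)$, i.e.\ order $1/2$. To recover order $1$ you must (i) replace $e(s)$ by $e(\check{s})$, treating the difference $e(s)-e(\check{s})=\int_{\check{s}}^s[f(x(u))-f(y(\check{s}))]du+\int_{\check{s}}^sG(u)\,dw(u)$ as two further terms, and (ii) apply a stochastic Fubini interchange to rewrite $\int_0^t\int_{\check{s}}^s(\cdots)\,dw(u)\,ds$ as $\int_0^t(\hat{u}-u)(\cdots)\,dw(u)$, which \emph{is} a martingale with integrand of size $O(\Delta)$ and therefore of order $\Delta$ in $L^{p/2}$ after the Burkholder--Davis--Gundy inequality. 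This is exactly the paper's decomposition $J_4=J_{41}+\cdots$ with $J_{41}=J_{411}+J_{412}+J_{413}$, which occupies roughly half of the proof; your sketch names the mechanism but omits both ingredients that make it work. Everything else in your outline (the estimate of $G(s)$, the moment bookkeeping, the final Gronwall step) matches the paper and is sound.
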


To prove Theorem \ref{th21}, we need the following lemmas.
\begin{lemma}\label{lemmaboundfgfnumer}
  Let the conditions in Theorem \ref{th21} hold. Then for any $p\geq2$,
  \begin{equation}\label{eqboundfgf}
 \sup_{0\leq k\leq N}\Big[\mathbb{E}|f(y_k)|^p\Big]\vee\sup_{0\leq k\leq N}\Big[\mathbb{E}|f'(y_k)|^p\Big]\vee\sup_{0\leq k\leq N}\Big[\mathbb{E}|f''(y_k)|^p\Big]\leq C
  \end{equation}
  and
  \begin{equation}\label{boundegg}
  \sup_{0\leq k\leq N}\Big[\mathbb{E}|g(y_k)|^p\Big]\vee\sup_{0\leq k\leq N}\Big[\mathbb{E}|L^1g(y_k)|^p\Big]\leq C.
  \end{equation}
  Moreover, replacing $y_k$ by $z_k$, \eqref{eqboundfgf} and \eqref{boundegg} are also true.
\end{lemma}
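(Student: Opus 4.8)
The plan is to deduce \eqref{eqboundfgf} and \eqref{boundegg} from the moment bounds already established in \thmref{thbound121} (valid for $\theta\in[1/2,1]\supset(1/2,1]$), combined with the growth conditions on $f,g$ and their derivatives supplied by Assumptions \ref{onesided assumption}, \ref{assu further condition} and \ref{assu daoshu}. The key observation is that every function appearing in the statement is dominated by a polynomial in $|x|$, so once we know all moments of $\sup_k|y_k|$ (and $\sup_k|z_k|$) are finite, finiteness of the moments of these functions evaluated at $y_k$ (resp.\ $z_k$) follows from H\"older's inequality.

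First I would record the relevant polynomial growth bounds. From the one-sided Lipschitz condition \eqref{onesidedf} together with $f\in C^1$ one gets $|f(x)|\le |f(0)|+(\text{const})|x|$ is \emph{not} available in general, but from the monotone condition \eqref{monotone condition} and \eqref{f12} we do get $|f(x)|\le |f(0)|+\int_0^1|f'(rx)|\,dr\,|x|\le |f(0)|+D(1+|x|^q)|x|\le C(1+|x|^{q+1})$; similarly \eqref{f12} gives $|f'(x)|\vee|f''(x)|\le D(1+|x|^q)$ directly. For the diffusion terms, \eqref{lipschitzg} gives $|g(x)|\le |g(0)|+c^{1/2}|x|\le C(1+|x|)$, and \eqref{Lg} gives $|L^1g(x)|\le |L^1g(0)|+\sigma^{1/2}|x|\le C(1+|x|)$; finally \eqref{g2} bounds $|g''|$ by the constant $D$. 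So all six quantities in question are bounded by $C(1+|x|^r)$ for a fixed exponent $r$ (with $r=q+1$ the largest, coming from $f$).

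Next I would invoke \thmref{thbound121}: under Assumptions \ref{onesided assumption}–\ref{assu further condition} and $\Delta<\Delta^*$, for every $\tilde p\ge2$ we have $\mathbb{E}\big[\sup_{k\Delta\in[0,T]}|y_k|^{\tilde p}\big]\le C$ and likewise for $z_k$. Given any target $p\ge2$, apply this with $\tilde p=pr$ (increasing it to $\max\{2,pr\}$ if necessary) to obtain $\mathbb{E}|y_k|^{pr}\le C$ uniformly in $k\le N$. Then for each of the listed functions $\varphi\in\{f,f',f'',g,L^1g\}$ we estimate $\mathbb{E}|\varphi(y_k)|^p\le C\,\mathbb{E}(1+|y_k|^r)^p\le C(1+\mathbb{E}|y_k|^{pr})\le C$, using the elementary inequality $(a+b)^p\le 2^{p-1}(a^p+b^p)$; taking the supremum over $k$ gives \eqref{eqboundfgf} and \eqref{boundegg}. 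The identical argument with $z_k$ in place of $y_k$, using the $z$-moment bound from \thmref{thbound121}, yields the final sentence of the lemma.

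There is no real obstacle here — the lemma is a routine corollary of the moment bounds plus polynomial growth. The only point requiring a little care is the bound on $|f(x)|$ itself: since Assumption \ref{onesided assumption} alone does not give linear growth of $f$, one must use the derivative bound \eqref{f12} (via the fundamental theorem of calculus along the segment from $0$ to $x$) rather than \eqref{onesidedf} to obtain $|f(x)|\le C(1+|x|^{q+1})$, and then make sure the exponent $r=q+1$ is the one fed into \thmref{thbound121}. Everything else is H\"older and the elementary power inequality.
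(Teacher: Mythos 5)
Your argument is correct and is essentially the same as the paper's, which simply cites the polynomial growth condition \eqref{f12}, the global Lipschitz (hence linear growth) bounds for $g$ and $L^1g$, and the moment bounds of Theorem \ref{thbound121}; you have merely written out the details (including the correct observation that $|f(x)|\leq C(1+|x|^{q+1})$ must come from \eqref{f12} via the fundamental theorem of calculus rather than from the one-sided Lipschitz condition). No gaps.
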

\begin{proof}
By the polynomial growth condition \eqref{f12}, the global Lipschitz condition for $g$ and $L^1g$  and Theorem \ref{thbound121} give the desired assertions immediately.
\end{proof}

 Similarly, we can use the polynomial growth condition \eqref{f12}, the global Lipschitz condition for $g$ and $L^1g$  and Lemma \ref{lemma exact bound} to obtain the following lemma, whose proof is omitted.
\begin{lemma}\label{lemmaboundfgfexact}
  Let conditions in Theorem \ref{th21} hold. Then for any $p\geq2$,
  \begin{equation}\label{eqboundfgfex}
 \sup_{0\leq t\leq T}\Big[\mathbb{E}|f(x(t))|^p\Big]\vee \sup_{0\leq t\leq T}\Big[\mathbb{E}|f'(x(t))|^p\Big]\vee \sup_{0\leq t\leq T}\Big[\mathbb{E}|f''(x(t))|^p\Big]\leq C
  \end{equation}
  and
  \begin{equation}\label{boundeggex}
   \sup_{0\leq t\leq T}\Big[\mathbb{E}|g(y(t))|^p\Big]\vee \sup_{0\leq t\leq T}\Big[\mathbb{E}|L^1g(y(t))|^p\Big]\leq C.
  \end{equation}
\end{lemma}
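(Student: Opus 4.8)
The plan is to establish the uniform moment bound for $y(t)$ in $L^1g(y(t))$ and $f(\cdot)(x(t))$ by combining the global Lipschitz property of $g$ and $L^1g$, the polynomial growth of $f,f',f''$ from Assumption~\ref{assu daoshu}, and the already-proven uniform moment bounds for the exact solution (Lemma~\ref{lemma exact bound}). Concretely, for the statements in \eqref{eqboundfgfex}, I would write $|f(x(t))|^p \le C(1+|f(0)|^p) + C|x(t)|^{(q+1)p}$ using \eqref{f12} together with $|f(x)| \le |f(0)| + \int_0^1 |f'(sx)|\,|x|\,ds \le |f(0)| + D(1+|x|^q)|x|$, and similarly $|f'(x(t))|^p \le D^p(1+|x(t)|^q)^p \le C(1+|x(t)|^{qp})$ and $|f''(x(t))|^p \le C(1+|x(t)|^{qp})$. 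Taking expectations and invoking Lemma~\ref{lemma exact bound} with exponent $p' = (q+1)p$ (which requires only $p'\ge 2$, automatically satisfied) yields the bound uniformly in $t\in[0,T]$, since the constant $C$ from Lemma~\ref{lemma exact bound} depends only on $p'$ and $T$.

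For the diffusion terms in \eqref{boundeggex}, I would first bound $|g(x)| \le |g(0)| + \sqrt{c}\,|x|$ from \eqref{lipschitzg} and $|L^1g(x)| \le |L^1g(0)| + \sqrt{\sigma}\,|x|$ from \eqref{Lg}, so $|g(x)|^p \vee |L^1g(x)|^p \le C(1+|x|^p)$. The only subtlety is that \eqref{boundeggex} is stated in terms of $y(t)$, not $x(t)$, so I would use the relation $z(t) = y(t) - \theta\Delta f(y(t))$ together with the one-sided Lipschitz condition \eqref{onesidedf}, which gives, as in \eqref{1222}, the inequality $(1-2\theta\Delta\beta)|y(t)|^2 \le |z(t)|^2 + 2\theta\Delta\alpha$ for $\Delta < \Delta^* = 1/(2\theta\beta)$; hence $\mathbb{E}|y(t)|^p \le C\,\mathbb{E}|z(t)|^p + C$. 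It then suffices to bound $\mathbb{E}\big[\sup_{0\le t\le T}|z(t)|^p\big]$, which follows from the representation \eqref{continuous zk}, the discrete bound $\mathbb{E}\big[\sup_{k\Delta\in[0,T]}|z_k|^p\big]\le C$ of Theorem~\ref{thbound121}, and standard Burkholder--Davis--Gundy and H\"older estimates on the increments $t-t_k$, $w(t)-w(t_k)$ over a single subinterval, together with the discrete moment bounds on $f(y_k)$, $g(y_k)$, $L^1g(y_k)$ from Lemma~\ref{lemmaboundfgfnumer}; in fact $\mathbb{E}\big[\sup_{0\le t\le T}|y(t)|^p\big]\le C$ can be read off this way, whence \eqref{boundeggex} is immediate.

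The main obstacle, such as it is, is purely bookkeeping: one must verify that the polynomial exponent $(q+1)p$ (and $qp$) appearing after applying \eqref{f12} is still covered by Lemma~\ref{lemma exact bound}, which it is since that lemma holds for every $p\ge 2$ and these exponents are clearly $\ge 2$. Likewise one must keep track of the fact that the continuous interpolants $y(t), z(t)$ differ from their grid values $y_k, z_k$ only by terms that are $\bigO(\Delta^{1/2})$ in $L^p$ on each subinterval, so the supremum over $[0,T]$ of the moments is controlled by the supremum over the grid plus a vanishing correction; this is where the $\sup_{0\le t\le T}$ in the statement (as opposed to $\sup_k$) is handled. No new analytic idea beyond what was used in Section~3 is needed, which is exactly why the authors state that the proof is omitted.
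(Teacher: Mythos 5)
Your argument for \eqref{eqboundfgfex} is exactly the paper's (omitted) proof: polynomial growth of $f,f',f''$ from Assumption~\ref{assu daoshu} plus Lemma~\ref{lemma exact bound} applied with the enlarged exponent $(q+1)p$. The one place you diverge is \eqref{boundeggex}: the paper's one-line justification cites the global Lipschitz property of $g$ and $L^1g$ together with Lemma~\ref{lemma exact bound}, i.e.\ the moment bound on the \emph{exact} solution, and indeed Lemma~\ref{lemmaboundfgfexact} is later invoked (in the proof of Lemma~\ref{lemma regular}) to bound $\mathbb{E}|g(x(s))|^p$ --- so the $y(t)$ in \eqref{boundeggex} is almost certainly a typo for $x(t)$, and the intended proof is a one-line linear-growth estimate. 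You instead take the statement literally and prove the bound for the continuous interpolant $y(t)$ by passing through $z(t)=y(t)-\theta\Delta f(y(t))$, inequality \eqref{1222}, and a Burkholder--Davis--Gundy estimate on \eqref{continuous zk} using the discrete bounds of Theorem~\ref{thbound121} and Lemma~\ref{lemmaboundfgfnumer}; this is correct and non-circular (it is essentially a re-derivation of Lemma~\ref{lemma numer bound}, whose proof does not rely on the present lemma), but it does strictly more work than the paper intends and anticipates a result that the paper only states afterwards. Either reading yields a valid proof; if the statement is read with $x(t)$, your second paragraph collapses to the same two-line Lipschitz-plus-Lemma~\ref{lemma exact bound} argument as the first.
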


\begin{lemma}\label{lemma numer bound}
   Let conditions in Theorem \ref{th21} hold. For any $p\geq2$,
   \begin{equation}\label{zt}
     \mathbb{E}\Big[\sup_{t\in[0,T]}|z(t)|^p\Big]\vee\mathbb{E}\Big[\sup_{t\in[0,T]}|y(t)|^p\Big]\leq C
   \end{equation}
   and
   \begin{equation}\label{z-y}
     \mathbb{E}\Big[\sup_{t\in[0,T]}|z(t)-y(t)|^p\Big]\leq C\Delta^p.
   \end{equation}
\end{lemma}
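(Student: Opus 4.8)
The plan is to first establish the uniform moment bound \eqref{zt}, and then to extract the rate estimate \eqref{z-y} as a near-immediate consequence of the relation $z(t)-y(t)=-\theta f(y(t))\Delta$. For \eqref{zt}, I would start from the continuous-time representation \eqref{continuous integral zk} and bound $|z(t)|^p$ for $t\in[0,T]$ by splitting into the four summands: the initial datum $z(0)$, the drift integral $\int_0^t f(y(\check s))\,ds$, the Itô integral $\int_0^t g(y(\check s))\,dw(s)$, and the double Itô integral $\int_0^t\int_{\check s}^s L^1g(y(\check u))\,dw(u)\,dw(s)$. Using the elementary inequality $(\sum a_i)^p\le 4^{p-1}\sum a_i^p$, I reduce to bounding the $p$th moment of the sup over $[0,T]$ of each term separately. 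For the drift term, Hölder's inequality in time gives $\sup_{t\le T}|\int_0^t f(y(\check s))\,ds|^p \le T^{p-1}\int_0^T |f(y(\check s))|^p\,ds$, and since $\check s$ takes only the values $t_k$, this is controlled by $T^p \sup_{0\le k\le N}\mathbb{E}|f(y_k)|^p\le C$ by \lemref{lemmaboundfgfnumer}. For the single stochastic integral, the Burkholder--Davis--Gundy inequality bounds its $p$th moment by $C\,\mathbb{E}(\int_0^T |g(y(\check s))|^2\,ds)^{p/2}\le C T^{p/2-1}\int_0^T \mathbb{E}|g(y(\check s))|^p\,ds\le C$, again by \lemref{lemmaboundfgfnumer} and \eqref{lipschitzg}. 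The double integral is handled by applying BDG twice: first in $s$, reducing to $\mathbb{E}(\int_0^T |\int_{\check s}^s L^1g(y(\check u))\,dw(u)|^2\,ds)^{p/2}$, then using that the inner integral over $[\check s,s]\subseteq[t_k,t_{k+1}]$ has length at most $\Delta$, so by BDG and \lemref{lemmaboundfgfnumer} its $2p$th moment is $\le C\Delta^p$; combining the two stages yields a bound of the form $C\Delta^{p/2}\cdot T^{p/2}\le C$ (in fact of order $\Delta^{p/2}$, which is more than enough). This proves the bound for $z(t)$; the bound for $y(t)=F_{\Delta,\theta}(z(t))$ then follows from the inequality $(1-2\theta\Delta\beta)|y(t)|^2\le |z(t)|^2+2\theta\Delta\alpha$, which is exactly the continuous-time analogue of \eqref{1222} obtained from $z(t)=y(t)-\theta f(y(t))\Delta$ together with \eqref{monotone condition}, valid since $\Delta<\Delta^*$.

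For the rate \eqref{z-y}, I would simply use $z(t)-y(t)=-\theta f(y(t))\Delta$, so that $|z(t)-y(t)|^p=\theta^p\Delta^p|f(y(t))|^p$, and then take the supremum over $t\in[0,T]$ and the expectation. The only subtlety is that $\sup_{t\in[0,T]}|f(y(t))|$ is a supremum over a continuum of times rather than the grid points; but $y(t)=F_{\Delta,\theta}(z(t))$ is a continuous function of $z(t)$, and one can either use Assumption \ref{assu daoshu} (the polynomial growth of $f$) together with the already-established bound $\mathbb{E}[\sup_{t\le T}|y(t)|^{pq'}]\le C$ for a suitable exponent, giving $\mathbb{E}[\sup_{t\le T}|f(y(t))|^p]\le C$, or alternatively bound $|f(y(t))|$ in terms of $|z(t)-z(t_k)|$ and $|f(y_k)|$ on each subinterval using a local Lipschitz estimate. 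Either route closes the argument.

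The main obstacle I anticipate is the double stochastic integral term in the bound for $z(t)$: one must take care that the inner integral $\int_{\check s}^s L^1g(y(\check u))\,dw(u)$ is itself a martingale in $s$ (since the integrand is $\mathfrak{F}_{\check u}$-measurable and $\check u\le u\le s$), so that the outer BDG application is legitimate, and one should keep track of the dependence on $\Delta$ carefully to see that this term contributes only a lower-order $O(\Delta^{p/2})$ quantity rather than something that could blow up. Everything else is a routine combination of Hölder, BDG, the moment bounds from \thmref{thbound121} and \lemref{lemmaboundfgfnumer}, and the comparison inequality \eqref{1222}; no new ideas beyond those already used in Section 3 are required.
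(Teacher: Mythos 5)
Your proposal follows essentially the same route as the paper: the same four-term split of the continuous-time representation \eqref{continuous integral zk}, H\"older plus the Burkholder--Davis--Gundy inequality (applied twice for the double stochastic integral) together with \lemref{lemmaboundfgfnumer} to bound $z(t)$, the continuous-time analogue of \eqref{1222} to transfer the bound to $y(t)$, and the identity $z(t)-y(t)=-\theta f(y(t))\Delta$ combined with the polynomial growth of $f$ and the sup-moment bound on $y(t)$ to obtain \eqref{z-y}. The argument is correct and no further comment is needed.
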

\begin{proof}
   For any $p\geq2$ and $t\in[0,T]$, by \eqref{continuous integral zk},
   \begin{eqnarray}\label{L}
    \nonumber \mathbb{E}\Big(\sup_{s\in[0,t]}|z(s)|^p\Big)&\leq&4^{p-1}\mathbb{E}|z(0)|^p+4^{p-1}\mathbb{E}\Big|\int_0^t|f(y(\check{s}))|ds\Big|^p+4^{p-1}\mathbb{E}\Big(\sup_{s\in[0,t]}\Big|\int_0^s g(y(\check{s}))dw(s)\Big|^p\Big)\\
     &&+4^{p-1}\mathbb{E}\Big(\sup_{s\in[0,t]}\Big|\int_0^s\int_{\check{v}}^vL^1g(y(\check{u}))dw(u)dw(v)\Big|^p\Big).
   \end{eqnarray}
   Using Lemma \ref{lemmaboundfgfnumer}, we have
   \begin{eqnarray}\label{L1}
\mathbb{E}\Big|\int_0^t|f(y(\check{s}))|ds\Big|^p&\leq& C\mathbb{E}\int_0^t|f(y(\check{s}))|^p ds\leq C.
   \end{eqnarray}
Applying the Burkholder-Davis-Gundy inequality and  Lemma \ref{lemmaboundfgfnumer} yield
   \begin{eqnarray}\label{L2}
     \nonumber\mathbb{E}\Big(\sup_{s\in[0,t]}\Big|\int_0^s g(y(\check{s}))dw(s)\Big|^p\Big)&\leq& c_p\mathbb{E}\Big(\int_0^t |g(y(\check{s}))|^2ds\Big)^{p/2}\\
 &\leq&C\int_0^t \mathbb{E}|g(y(\check{s}))|^pds\leq C
   \end{eqnarray}
   and
   \begin{eqnarray}\label{L3}
     \nonumber\mathbb{E}\Big(\sup_{s\in[0,t]}\Big|\int_0^s\int_{\check{v}}^vL^1g(y(\check{u}))dw(u)dw(v)\Big)^p&\leq& c_p\mathbb{E}\Big(\int_0^t\Big|\int_{\check{v}}^vL^1g(y(\check{u}))dw(u)\Big|^2dv\Big)^{p/2}\\
    \nonumber &\leq&  C\Delta^{p/2}\int_0^t\mathbb{E}|L^1g(y(\check{v}))|^pdv\\
   &\leq&C,
   \end{eqnarray}
 where $c_p$ is a constant dependent on $p$. Therefore, \eqref{L1}-\eqref{L3} and \eqref{L} produce
   $$
   \mathbb{E}\Big[\sup_{s\in[0,T]}|z(s)|^p\Big]\leq C,
   $$
   which together with \eqref{1222} gives
   $$
   \mathbb{E}\Big[\sup_{s\in[0,T]}|y(s)|^p\Big]\leq C.
   $$
   Note that $z(t)=y(t)-\theta f(y(t))\Delta$. Hence \eqref{z-y} follows from \eqref{zt}.
\end{proof}
\begin{lemma}\label{lemma regular}
   Let conditions in Theorem \ref{th21} hold. For any $p\geq2$ and $t\in[0,T]$,
    \begin{equation}\label{x-xx}
     \mathbb{E}|x(t)-x(\check{t})|^p\leq C\Delta^{p/2}
   \end{equation}
   and
  \begin{equation}\label{z-zx}
     \mathbb{E}|z(t)-z(\check{t})|^p\leq C\Delta^{p/2}.
   \end{equation}
\end{lemma}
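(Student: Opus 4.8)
The plan is to prove the two bounds separately; in each case I would write the increment over $[\check t,t]$ explicitly and estimate the resulting terms one at a time. The key simplification is that, by the definition of $\check{\,\cdot\,}$, whenever $t\in[t_k,t_{k+1})$ we have $\check t=t_k$, hence $0\le t-\check t<\Delta$; thus no summation over grid points appears and every constant stays independent of $N=T/\Delta$. Throughout I would use the elementary inequality $|a+b+c|^p\le 3^{p-1}(|a|^p+|b|^p+|c|^p)$, H\"older's inequality in time, the Burkholder--Davis--Gundy inequality, and the standard Gaussian moment formulas $\mathbb{E}|w(t)-w(\check t)|^p=C_p(t-\check t)^{p/2}$ and $\mathbb{E}\big||w(t)-w(\check t)|^2-(t-\check t)\big|^p=C_p(t-\check t)^{p}$.

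For \eqref{x-xx}, I would start from
\[
x(t)-x(\check t)=\int_{\check t}^{t} f(x(s))\,ds+\int_{\check t}^{t} g(x(s))\,dw(s).
\]
For the drift part, H\"older's inequality gives $\mathbb{E}\big|\int_{\check t}^{t}f(x(s))\,ds\big|^p\le \Delta^{p-1}\int_{\check t}^{t}\mathbb{E}|f(x(s))|^p\,ds\le C\Delta^{p}$ by \lemref{lemmaboundfgfexact}. For the diffusion part, Burkholder--Davis--Gundy followed by H\"older gives
\[
\mathbb{E}\Big|\int_{\check t}^{t} g(x(s))\,dw(s)\Big|^p\le c_p\,\mathbb{E}\Big(\int_{\check t}^{t}|g(x(s))|^2\,ds\Big)^{p/2}\le c_p\,\Delta^{p/2-1}\int_{\check t}^{t}\mathbb{E}|g(x(s))|^p\,ds,
\]
which is $\le C\Delta^{p/2}$ since $|g(x)|^2\le\alpha+\beta|x|^2$ by \eqref{monotone condition} and $\sup_{0\le s\le T}\mathbb{E}|x(s)|^p\le C$ by \lemref{lemma exact bound}. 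As $\Delta$ is bounded and $p/2\le p$, the diffusion term dominates and \eqref{x-xx} follows.

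For \eqref{z-zx}, I would fix $t\in[t_k,t_{k+1})$ so that $\check t=t_k$ and $y(\check t)=y_k$; by \eqref{continuous zk},
\[
z(t)-z(\check t)=(t-t_k)f(y_k)+g(y_k)\big(w(t)-w(t_k)\big)+\tfrac12 L^1g(y_k)\big(|w(t)-w(t_k)|^2-(t-t_k)\big).
\]
The first term contributes at most $\Delta^{p}\,\mathbb{E}|f(y_k)|^p\le C\Delta^{p}$ by \lemref{lemmaboundfgfnumer}. For the second, since $y_k$ is $\mathfrak{F}_{t_k}$-measurable while $w(t)-w(t_k)$ is independent of $\mathfrak{F}_{t_k}$, $\mathbb{E}|g(y_k)(w(t)-w(t_k))|^p=\mathbb{E}|g(y_k)|^p\cdot\mathbb{E}|w(t)-w(t_k)|^p\le C\Delta^{p/2}$, again by \lemref{lemmaboundfgfnumer}. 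For the third, the same independence together with the Gaussian moment formula gives $\mathbb{E}|L^1g(y_k)(|w(t)-w(t_k)|^2-(t-t_k))|^p=\mathbb{E}|L^1g(y_k)|^p\cdot C_p(t-t_k)^{p}\le C\Delta^{p}$. Collecting the three estimates, the Brownian-increment term, of order $\Delta^{p/2}$, dominates, which gives \eqref{z-zx}.

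I do not anticipate a genuine obstacle here: the argument rests entirely on the a priori moment bounds of \lemref{lemma exact bound}, \lemref{lemmaboundfgfnumer} and \lemref{lemmaboundfgfexact}, which are already in hand, plus routine stochastic-analysis inequalities. The only point that requires care is to apply each growth estimate at the grid value $y_k$ (for $z$) or pointwise on the short interval $[\check t,t]$ (for $x$), so that it is the short length $t-\check t<\Delta$ that produces the powers of $\Delta$, and no hidden factor of $N$ creeps into the constants.
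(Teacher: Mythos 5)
Your proposal is correct and follows essentially the same route as the paper: the same decomposition of $x(t)-x(\check t)$ into drift and diffusion integrals handled by H\"older and Burkholder--Davis--Gundy, and the same term-by-term treatment of the three increment terms of $z(t)-z(\check t)$ via the a priori moment bounds and Gaussian moment formulas. The only (harmless) difference is that you invoke the monotone condition together with \lemref{lemma exact bound} to bound $\mathbb{E}|g(x(s))|^p$, where the paper cites \lemref{lemmaboundfgfexact}, and you make the independence of $\Delta w_k$ from $\mathfrak{F}_{t_k}$ explicit where the paper leaves it implicit.
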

\begin{proof}
 By \eqref{SDE} and the Burkholder-Davis-Gundy inequality, we have
 \begin{eqnarray*}
    \mathbb{E}|x(t)-x(\check{t})|^p&=&C \mathbb{E}|\int_{\check{t}}^tf(x(s))ds|^p+C \mathbb{E}|\int_{\check{t}}^tg(x(s))dw(s)|^p\\
    &\leq&C\Delta^{p-1}\int_{\check{t}}^t\mathbb{E}|f(x(s))|^p ds+C \mathbb{E}\Big|\int_{\check{t}}^t|g(x(s))|^2ds\Big|^{p/2}\leq C\Delta^{p/2},
  \end{eqnarray*}
  where we also used Lemma \ref{lemmaboundfgfexact}.
  By \eqref{continuous zk}, we have
  \begin{eqnarray*}
    \mathbb{E}|z(t)-z(\check{t})|^p&=&C\Delta^p \mathbb{E}|f(y(\check{t}))|^p+ C\mathbb{E}|g(y(\check{t}))(w(t)-w(\check{t}))|^p\\
    &&+C\mathbb{E}|L^1g(y(\check{t}))(|w(t)-w(\check{t})|^2-(t-\check{t}))|^p\\
    &\leq&C\Delta^p \mathbb{E}|f(y(\check{t}))|^p+ C\Delta^{p/2}\mathbb{E}|g(y(\check{t}))|^p+C\Delta^p\mathbb{E}|L^1g(y(\check{t}))|^p.
  \end{eqnarray*}
  Then we obtain \eqref{z-zx} from Lemma \ref{lemmaboundfgfnumer}.
\end{proof}

\begin{lemma}\label{lemma rf}
   For any $p\geq2$ and $t\in[0,T]$,
  \begin{equation}\label{rp}
     \mathbb{E}|R_t(f)|^p\leq C\Delta^{p}.
   \end{equation}
\end{lemma}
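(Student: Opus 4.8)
The plan is to expand $R_t(f)$ by the Taylor formula \eqref{taylor2} with $h=f$ and to estimate each of its three constituents in $L^p$. Using the linearity of $h'(\cdot)$ in its argument,
\[
R_t(f)=f'(z(\breve t))\big((t-\check t)f(y(\check t))\big)+\tfrac12\,f'(z(\breve t))\big(L^1g(y(\check t))(|w(t)-w(\check t)|^2-(t-\check t))\big)+\bar R_t(f),
\]
so by $|a+b+c|^p\le 3^{p-1}(|a|^p+|b|^p+|c|^p)$ it suffices to bound the $p$th moment of each summand by $C\Delta^p$. Throughout I will use that $|z(\breve t)|\le\sup_{s\in[0,T]}|z(s)|$, so that the polynomial growth bound \eqref{f12} together with Lemma \ref{lemma numer bound} makes $\mathbb{E}|f'(z(\breve t))|^r$ and $\mathbb{E}|f''(z(\breve t))|^r$ bounded by a constant for every $r\ge2$.

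For the first summand, $|t-\check t|\le\Delta$ gives the pointwise bound $\Delta\,|f'(z(\breve t))|\,|f(y(\check t))|$, and the Cauchy--Schwarz inequality together with Lemmas \ref{lemmaboundfgfnumer} and \ref{lemma numer bound} produces a bound of order $\Delta^p$. For the second summand I note that $|w(t)-w(\check t)|^2-(t-\check t)=(t-\check t)(\xi^2-1)$ with $\xi\sim N(0,1)$, so $\mathbb{E}\big||w(t)-w(\check t)|^2-(t-\check t)\big|^q\le C\Delta^q$ for every $q\ge1$; applying H\"older's inequality with exponents $3,3,3$ to the product $|f'(z(\breve t))|^p\,|L^1g(y(\check t))|^p\,\big||w(t)-w(\check t)|^2-(t-\check t)\big|^p$, and bounding the first two factors by Lemmas \ref{lemmaboundfgfnumer} and \ref{lemma numer bound}, again yields the $\Delta^p$ estimate.

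The main work is the remainder $\bar R_t(f)$. From its definition \eqref{barR} and the polynomial growth bound \eqref{f12} for $f''$,
\[
|\bar R_t(f)|\le\tfrac12\,D\big(1+(|z(\check t)|+|z(t)|)^q\big)\,|z(t)-z(\check t)|^2 .
\]
Raising to the $p$th power and applying the Cauchy--Schwarz inequality splits this into $\big(\mathbb{E}\,D^{2p}(1+(|z(\check t)|+|z(t)|)^q)^{2p}\big)^{1/2}$, which is bounded by a constant by Lemma \ref{lemma numer bound}, times $\big(\mathbb{E}|z(t)-z(\check t)|^{4p}\big)^{1/2}$, which is bounded by $C\Delta^{2p}$ by Lemma \ref{lemma regular} (estimate \eqref{z-zx} applied with $4p$ in place of $p$); hence $\mathbb{E}|\bar R_t(f)|^p\le C\Delta^p$. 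Collecting the three estimates gives \eqref{rp}. The one point requiring care is controlling the superlinear growth of $f''$ along the random interpolation while still extracting the full first order; this is precisely where the uniform moment bound of Lemma \ref{lemma numer bound} must be paired with the $\tfrac12$-order regularity estimate of Lemma \ref{lemma regular}, the square in $\bar R_t(f)$ upgrading the $\Delta^{1/2}$ increment bound to $\Delta$.
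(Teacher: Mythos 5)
Your proof is correct and follows essentially the same route as the paper: the same three-way splitting of $R_t(f)$, the same use of Lemmas \ref{lemmaboundfgfnumer}, \ref{lemma numer bound} and \ref{lemma regular} together with Assumption \ref{assu daoshu}, and the same observation that the square of the increment in $\bar R_t(f)$ upgrades the $\Delta^{1/2}$ regularity to order $\Delta$. The only (immaterial) deviation is that you handle the It\^{o}-correction term by a three-exponent H\"older inequality where the paper factorizes the expectation using the independence of $w(t)-w(\check t)$ from $\mathfrak{F}_{\check t}$; your added detail on $\mathbb{E}|\bar R_t(f)|^p$ is exactly what the paper leaves implicit.
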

\begin{proof}
By the definition of $R_t(f)$, we have
\begin{eqnarray*}
  \mathbb{E}|R_t(f)|^p &\leq& \Delta^p3^{p-1}\mathbb{E}|f'(z(\check{t}))(f(y(\check{t})))|^p+3^{p-1}\mathbb{E}|\bar{R}_t(f))|^p\\
  &&+\Delta^p3^{p-1}\frac{1}{2}\mathbb{E}|f'(z(\check{t}))(L^1g(y(\check{t})))|^p\mathbb{E}\Big||w(t)-w(\check{t})|^2-(t-\check{t}))\Big|^p.
\end{eqnarray*}
By Lemmas \ref{lemma numer bound} and \ref{lemma regular} as well as Assumption \ref{assu daoshu}, we can obtain $\mathbb{E}|\bar{R}_t(f))|^p\leq C\Delta^p$. By Lemma \ref{lemmaboundfgfnumer}, we can obtain the desired assertion \eqref{rp}.
\end{proof}

 \begin{proof}[Proof of Theorem \ref{th21}]
  Let $e(t)=x(t)-z(t)$. By \eqref{continuous integral zk} and \eqref{SDE}, we have
\begin{equation}\label{z-x}
e(t)=e(0)+\int_0^t[f(x(s))-f(y(\check{s}))]ds+\int_0^t[g(x(s))-g(y(\check{s}))-\int_{\check{s}}^sL^1g(y(\check{u}))dw(u)]dw(s).
\end{equation}
For the purpose of simplification, define $G(s)=g(x(s))-g(y(\check{s}))-\int_{\check{s}}^sL^1g(y(\check{u}))dw(u)$. Applying It\^{o}'s formula to $|e(t)|^2$ gives
  \begin{eqnarray}\label{z-x2}
      \nonumber |e(t)|^2&=&|e(0)|^2+\int_0^t|G(s)|^2ds+2\int_0^t\langle e(s),G(s)\rangle dw(s)+2\int_0^t\langle e(s),f(x(s))-f(y(\check{s}))\rangle ds\\
     \nonumber &\leq&\theta^2\Delta^2|f(x(0))|^2+\int_0^t|G(s)|^2ds+2\int_0^t\langle e(s),G(s)\rangle dw(s)+2\mu\int_0^t|e(s)|^2 ds\\
     \nonumber &&+2\int_0^t\langle e(s),f(z(\check{s}))-f(y(\check{s}))\rangle ds+2\int_0^t\langle e(s),f(z(s))-f(z(\check{s}))\rangle ds\\
      &=&:\theta^2\Delta^2|f(x(0))|^2+J_1(t)+J_2(t)+2\mu\int_0^t|e(s)|^2 ds+J_3(t)+J_4(t).
    \end{eqnarray}
Applying It\^{o}'s formula for $g(x(s))$ gives
    \begin{eqnarray}\label{gito}
      g(x(s)) = g(x(\check{s}))+\int_{\check{s}}^sL^1g(x(u))dw(u)+\int_{\check{s}}^s(g\diamond f)(x(u)))du,
    \end{eqnarray}
    where $(g\diamond f)(x(u)))=g'(x(u))f(x(u))+\frac{1}{2}trace\{g(x(u))^Tg''(x(u))g(x(u))\}$.
    Substituting \eqref{gito} into $G(s)$ and using  the Burkholder-Davis-Gundy inequality and H\"{o}lder's inequality yield that for any $p\geq2$,
    \begin{eqnarray*}
      \mathbb{E}|G(s)|^p&\leq& 4^{p-1} \mathbb{E}|g(x(\check{s}))-g(y(\check{s}))|^p+4^{p-1}\mathbb{E}\Big|\int_{\check{s}}^s[L^1g(x(u))-L^1g(x(\check{u}))]dw(u)\Big|^p\\
      &&+4^{p-1}\mathbb{E}\Big|\int_{\check{s}}^s[L^1g(x(\check{u}))-L^1g(y(\check{u}))]dw(u)\Big|^p+4^{p-1}\mathbb{E}\Big|\int_{\check{s}}^s(g\diamond f)(x(u)))du\Big|^p\\
      &\leq&C\mathbb{E}|x(\check{s})-y(\check{s})|^p+C\mathbb{E}\Big|\int_{\check{s}}^s|x(u)-x(\check{u})|^2]ds\Big|^{p/2}\\
      &&+C\mathbb{E}\Big|\int_{\check{s}}^s|x(\check{u})-y(\check{u})|^2ds\Big|^{p/2}+C\Delta^p\\
      &\leq&C\mathbb{E}|x(\check{s})-z(\check{s})|^p+C\mathbb{E}|z(\check{s})-y(\check{s})|^p+C\Delta^{p/2-1}\mathbb{E}\int_{\check{s}}^s|x(u)-x(\check{u})|^pds\\
      &&+C\Delta^{p/2-1}\mathbb{E}\int_{\check{s}}^s|x(\check{u})-z(\check{u})|^pds+C\Delta^{p/2-1}\mathbb{E}\int_{\check{s}}^s|z(\check{u})-y(\check{u})|^pds+C\Delta^p\\
      &\leq&C\mathbb{E}|e(\check{s})|^p+C\Delta^p,
    \end{eqnarray*}
    where we also used Assumptions \ref{onesided assumption} and \ref{assu further condition} as well as Lemmas \ref{lemma exact bound}, \ref{lemmaboundfgfnumer}, \ref{lemma regular}. Hence,
     \begin{equation}\label{j1}
       \mathbb{E}\Big[\sup_{t\in[0,r]}|J_1(t)|^{p/2}\Big]=C\mathbb{E}\int_0^r|G(s)|^pds\leq C\int_0^r\mathbb{E}|e(\check{s})|^pds+C\Delta^p.
    \end{equation}
Using the Burkholder-Davis-Gundy inequality  and the H\"{o}lder inequality yields that for any $p\geq2$ and $r\in[0,T]$
     \begin{eqnarray*}
 \nonumber\mathbb{E}\Big[\sup_{t\in[0,r]}|J_2(t)|^{p/2}\Big]&=&2^{p/2}\mathbb{E}\sup_{t\in[0,r]}\Big|\int_0^t\langle e(s),G(s)\rangle dw(s)\Big|^{p/2}\\
     \nonumber  &\leq&2^{p/2}c_p\mathbb{E}\Big(\int_0^r|\langle e(s),G(s)\rangle|^2ds\Big)^{p/4}\\
     \nonumber  &\leq&2^{p/2}c_p\mathbb{E}\Big(\sup_{s\in[0,r]}|e(s)|^2\int_0^r|G(s)|^2ds\Big)^{p/4}
    \end{eqnarray*}
    where $c_p$ is a constant dependent on $p$. Recalling the fundamental inequality: $2ab\leq \kappa_1a^2+\frac{1}{\kappa_1}b^2$ for any $a, b, \kappa_1>0$ yields
     \begin{eqnarray}\label{j2}
 \nonumber\mathbb{E}\Big[\sup_{t\in[0,r]}|J_2(t)|^{p/2}\Big] &\leq&\kappa_1\mathbb{E}\Big[\sup_{s\in[0,r]}|e(s)|^p\Big]+\frac{2^{p}c_p^2}{\kappa_1}\mathbb{E}\Big(\int_0^r|G(s)|^2ds\Big)^{p/2}\\
       &\leq& \kappa_1\mathbb{E}\Big[\sup_{s\in[0,r]}|e(s)|^p\Big]+C\int_0^r\mathbb{E}|e(\check{s})|^pds+C\Delta^p.
    \end{eqnarray}
 Similarly, using H\"{o}lder's inequality, Assumption \ref{assu daoshu} and Lemma \ref{lemma numer bound}, we have
    \begin{eqnarray}\label{j3}
      \nonumber  \mathbb{E}\Big[\sup_{t\in[0,r]}|J_3(t)|^{p/2}\Big]&\leq&C\mathbb{E}\int_0^t| e(s)|^pds+C\mathbb{E}\int_0^r|f(z(\check{s}))-f(y(\check{s}))|^pds\\
      \nonumber &\leq &C\int_0^r\mathbb{E}|e(s)|^pds+C\int_0^r\sqrt{\mathbb{E}(1+|z(\check{s})|^q+|y(\check{s})|^q)^{2p} \mathbb{E}|z(\check{s})-y(\check{s})|^{2p}}ds\\
       &\leq& C\int_0^r\mathbb{E}|e(s)|^pds+C\Delta^p.
    \end{eqnarray}
Now, let us estimate $J_4$. Using \eqref{taylor2} produces
    \begin{eqnarray}\label{j4}
     \nonumber J_4(t) &=& 2\int_0^t\Big\langle e(s),f'(z(\check{s}))\Big(\int_{\check{s}}^sg(y(\check{s}))dw(u)\Big)+R_s(f)\Big\rangle ds \\
      \nonumber &=& 2\int_0^t\Big\langle e(s),f'(z(\check{s}))\Big(\int_{\check{s}}^sg(y(\check{s}))dw(u)\Big)\Big\rangle ds+2\int_0^t\langle x(s)-z(s),R_s(f)\rangle ds\\
       &\leq&J_{41}(t)+\int_0^t|e(s)|^2ds+\int_0^t|R_s(f)|^2 ds,
    \end{eqnarray}
    where $J_{41}(t)=2\int_0^t\langle e(s),f'(z(\check{s}))(\int_{\check{s}}^sg(y(\check{s}))dw(u))\rangle ds.$ It is easy to deduce from Lemma \ref{lemma rf} that
    \begin{equation}\label{rf}
     \mathbb{E}\Big[\sup_{t\in[0,r]}\Big|\int_0^t|R_s(f)|^2 ds\Big|^{p/2}\Big]\leq C\Delta^p.
    \end{equation}
Noting that $e(s)= e(\check{s})+\int_{\check{s}}^s[f(x(u))-f(y(\check{s})]du+\int_{\check{s}}^sG(u)dw(u)$, we have
    \begin{eqnarray*}
      J_{41}(t)&=& 2\int_0^t\int_{\check{s}}^s\langle e(\check{s}),f'(z(\check{s}))(g(y(\check{s})))\rangle dw(u) ds \\
         &&+2\int_0^t\Big\langle \int_{\check{s}}^sG(u)dw(u),f'(z(\check{s}))\Big(\int_{\check{s}}^sg(y(\check{s}))dw(u)\Big)\Big\rangle ds\\
         &&+  2\int_0^t\Big\langle \int_{\check{s}}^s[f(x(u))-f(y(\check{s}))]du,f'(z(\check{s}))\Big(\int_{\check{s}}^sg(y(\check{s}))dw(v)\Big)\Big\rangle ds\\
         &=:&J_{411}(t)+J_{412}(t)+J_{413}(t).
    \end{eqnarray*}
    For any $t\in[0,T]$, let $k_t=\max\{k>0,t_k<t\}$. Define $\hat{s}:=t_{i}$ for $t_{i-1} <s\leq t_{i}$ and $\hat{s}:=t$ for $t_{k_t} <s\leq t$. Applying Fubini's theorem, we can obtain
    \begin{eqnarray*}
      J_{411}(t)/2&=&\sum_{i=1}^{k_t} \int_{t_{i-1}}^{t_{i}}\int_{t_{i-1}}^s\langle e(t_{i-1}),f'(z(t_{i-1}))(g(y(t_{i-1})))\rangle dw(u) ds\\
      &&+\int_{t_{k_t}}^{t}\int_{t_{k_t}}^s\langle e(t_{k_t}),f'(z(t_{k_t}))(g(y(t_{k_t})))\rangle dw(u) ds\\
      &=&\sum_{i=1}^{k_t} \int_{t_{i-1}}^{t_{i}}\int_{u}^{t_i}\langle e(t_{i-1}),f'(z(t_{i-1}))(g(y(t_{i-1})))\rangle ds dw(u) \\
      &&+\int_{t_{k_t}}^{t}\int_{u}^t\langle e(t_{k_t}),f'(z(t_{k_t}))(g(y(t_{k_t})))\rangle ds dw(u) \\
      &=&\sum_{i=1}^{k_t} \int_{t_{i-1}}^{t_{i}}(t_i-u)\langle e(t_{i-1}),f'(z(t_{i-1}))(g(y(t_{i-1})))\rangle ds dw(u) \\
      &&+\int_{t_{k_t}}^{t}(t-u)\langle e(t_{k_t}),f'(z(t_{k_t}))(g(y(t_{k_t})))\rangle ds dw(u)\\
      &=&\int_{0}^{t}(\hat{u}-u)\langle e(\check{u}),f'(z(\check{u}))(g(y(\check{u})))\rangle dw(u).
    \end{eqnarray*}
For any $\kappa_2>0$, using the Burkholder-Davis-Gundy inequality, H\"{o}lder's inequality and Lemma \ref{lemmaboundfgfnumer} gives
   \begin{eqnarray}\label{j411}
    \nonumber \mathbb{E}\Big[\sup_{t\in[0,r]} |J_{411}(t)|^{p/2}\Big] &\leq &c_p\mathbb{E}\Big(\int_{0}^{r}2|(\hat{u}-u)\langle e(\check{u}),f'(z(\check{u}))(g(y(\check{u})))\rangle|^2 du\Big)^{p/4}\\
    \nonumber &\leq&c_p\mathbb{E}\Big(\int_{0}^{r}2|e(\check{u})|^2\Delta^2|f'(z(\check{u}))(g(y(\check{u})))|^2 du\Big)^{p/4}\\
    \nonumber &\leq&c_p \mathbb{E}\Big(2\sup_{s\in[0,r]}|e(s)|^2\Delta^2\int_{0}^{r}|f'(z(\check{u}))(g(y(\check{u})))|^2 du\Big)^{p/4}\\
     \nonumber &\leq& \kappa_2\mathbb{E}\Big[\sup_{s\in[0,r]}|e(s)|^p\Big]+\frac{c_p^22^{p/2}}{\kappa_2}\Delta^p\mathbb{E}\Big(\int_{0}^{r}|f'(z(\check{u}))(g(y(\check{u})))|^2 du\Big)^{p/2}\\
     &\leq& \kappa_2\mathbb{E}\Big[\sup_{s\in[0,r]}|e(s)|^p\Big]+C\Delta^p,
    \end{eqnarray}
Similarly,
   \begin{eqnarray}\label{j412}
   \nonumber  \mathbb{E}\Big[\sup_{t\in[0,r]} |J_{412}(t)|^{p/2}\Big] &\leq&C\mathbb{E}\int_0^r\Big|\Big\langle \int_{\check{s}}^sG(u)dw(u),f'(z(\check{s}))\Big(\int_{\check{s}}^sg(y(\check{s}))dw(u)\Big)\Big\rangle\Big|^{p/2} ds\\
    \nonumber &\leq&C \int_0^r  \sqrt{\mathbb{E}\Big|\int_{\check{s}}^sG(u)dw(u)\Big|^p\mathbb{E}\Big|\int_{\check{s}}^s f'(z(\check{s}))(g(y(\check{s})))dw(u)\Big|^p}ds\\
   \nonumber  &\leq&C\int_0^r  \sqrt{\mathbb{E}\Big|\int_{\check{s}}^s|G(u)|^2du\Big|^{p/2}\mathbb{E}\Big|\int_{\check{s}}^s |f'(z(\check{s}))(g(y(\check{s})))|^2du\Big|^{p/2}}ds\\
    \nonumber &\leq&C\int_0^r  \sqrt{\Delta^{p-2}\int_{\check{s}}^s\mathbb{E}|G(u)|^pdu\int_{\check{s}}^s \mathbb{E}|f'(z(\check{s}))(g(y(\check{s})))|^pdu}ds\\
    \nonumber &\leq&C\int_0^r  \sqrt{\Delta^{p}[\mathbb{E}|e(\check{s})|^p+\Delta^p]}ds\\
     &\leq&C\int_0^r\mathbb{E}|e(\check{s})|^pds+C\Delta^p .
    \end{eqnarray}
  In order to estimate $J_{413}$, we divide it into the following three parts
  \begin{eqnarray}\label{j413}
\nonumber J_{413}(t)&=&2\int_0^t\Big\langle \int_{\check{s}}^s[f(x(u))-f(x(\check{u}))]du,f'(z(\check{s}))\Big(\int_{\check{s}}^sg(y(\check{s}))dw(v)\Big)\Big\rangle ds\\
\nonumber&&+2\int_0^t\Big\langle \int_{\check{s}}^s[f(z(\check{u}))-f(y(\check{s}))]du,f'(z(\check{s}))\Big(\int_{\check{s}}^sg(y(\check{s}))dw(v)\Big)\Big\rangle ds\\
\nonumber &&+2\int_0^t\Big\langle \int_{\check{s}}^s[f(x(\check{u}))-f(z(\check{u}))]du,f'(z(\check{s}))\Big(\int_{\check{s}}^sg(y(\check{s}))dw(v)\Big)\Big\rangle ds\\
&=&:I_1(t)+I_2(t)+I_3(t).
  \end{eqnarray}
Using the Burkholder-Davis-Gundy inequality and H\"{o}lder's inequality give  that for any $p\geq2$,
\begin{eqnarray}\label{i1}
   \nonumber && \mathbb{E}\Big[\sup_{t\in[0,r]} |I_{1}(t)|^{p/2}\Big]\\
  \nonumber   &&\leq C\mathbb{E}\int_0^r\Big|\Big\langle \int_{\check{s}}^s[f(x(u))-f(x(\check{u}))]du,f'(z(\check{s}))\Big(\int_{\check{s}}^sg(y(\check{s}))dw(u)\Big)\Big\rangle\Big|^{p/2} ds\\
   \nonumber  &&\leq  C \int_0^r  \sqrt{\mathbb{E}\Big|\int_{\check{s}}^s[f(x(u))-f(x(\check{u}))]du\Big|^p\mathbb{E}\Big|\int_{\check{s}}^s f'(z(\check{s}))(g(y(\check{s})))dw(u)\Big|^p}ds\\
   \nonumber  &&\leq C\int_0^r  \sqrt{\Delta^{p-1}\int_{\check{s}}^s\mathbb{E}|f(x(u))-f(x(\check{u}))|^pdu\Delta^{p/2-1}\int_{\check{s}}^s \mathbb{E}|f'(z(\check{s}))(g(y(\check{s})))|^pdu}ds\\
  &&\leq C\int_0^r  \sqrt{\Delta^{p-1}\Delta\Delta^{p/2}\Delta^{p/2-1}\Delta}ds\leq C\Delta^p,
\end{eqnarray}
where we used Lemma \ref{lemmaboundfgfnumer} and \ref{lemma regular}. Similarly,
\begin{eqnarray}\label{i2}
     \nonumber\mathbb{E}\Big[\sup_{t\in[0,r]} |I_{2}(t)|^{p/2}\Big] &\leq&(2T)^{p/2}\mathbb{E}\int_0^r\Big|\Big\langle \int_{\check{s}}^s[f(z(\check{u}))-f(y(\check{u}))]du,f'(z(\check{s}))\Big(\int_{\check{s}}^sg(y(\check{s}))dw(u)\Big)\Big\rangle\Big|^{p/2} ds\\
    \nonumber &\leq&C \int_0^r  \sqrt{\mathbb{E}\Big|\int_{\check{s}}^s[f(z(\check{u}))-f(y(\check{u}))]du\Big|^p\mathbb{E}\Big|\int_{\check{s}}^s f'(z(\check{s}))(g(y(\check{s})))dw(u)\Big|^p}ds\\
  \nonumber   &\leq&C\int_0^r  \sqrt{\Delta^p\mathbb{E}|f(z(\check{s}))-f(y(\check{s}))|^p\Delta^{p/2-1}\int_{\check{s}}^s \mathbb{E}|f'(z(\check{s}))(g(y(\check{s})))|^pdu}ds\\
&\leq&C\int_0^r  \sqrt{\Delta^{p}\Delta^{p}\Delta^{p/2}}ds\leq C\Delta^p .
\end{eqnarray}
Applying Fubini's theorem obtains
\begin{eqnarray*}
I_3(t)&=&2\int_0^t \int_{\check{s}}^s(s-\check{s})\langle f(x(\check{s}))-f(z(\check{s})),f'(z(\check{s}))(g(y(\check{s})))\rangle dw(v) ds\\
&=&2\int_0^t \int_{v}^{\hat{v}}(s-\check{s})\langle f(x(\check{s}))-f(z(\check{s})),f'(z(\check{s}))(g(y(\check{s})))\rangle ds dw(v).
\end{eqnarray*}
Therefore,
\begin{eqnarray}\label{i3}
  \nonumber   \mathbb{E}\Big[\sup_{t\in[0,r]} |I_{3}(t)|^{p/2}\Big] &\leq&C \mathbb{E}\Big(\int_0^r  \Big|\int_{v}^{\hat{v}}(s-\check{v})\langle f(x(\check{v}))-f(z(\check{v})),f'(z(\check{v}))(g(y(\check{v})))\rangle ds\Big|^2 dv\Big)^{p/4}\\
  \nonumber   &\leq&C \mathbb{E}\Big(\int_0^t \Big|\int_{v}^{\hat{v}}(s-\check{v})ds\langle f(x(\check{v}))-f(z(\check{v})),f'(z(\check{v}))(g(y(\check{v})))\rangle\Big|^2 dv\Big)^{p/4}\\
 \nonumber    &\leq&C\Delta^p\int_0^r\mathbb{E}\langle f(x(\check{v}))-f(z(\check{v})),f'(z(\check{v}))(g(y(\check{v})))\rangle\Big|^{p/2}dv\\
     &\leq&C\Delta^p.
\end{eqnarray}
By \eqref{i1}-\eqref{i3}, we obtain from \eqref{j413}
  \begin{equation*}
  \mathbb{E}\Big[\sup_{t\in[0,r]} |J_{413}(t)|^{p/2}\Big]\leq C\Delta^p,
  \end{equation*}
  which together with \eqref{j411} and \eqref{j412} implies
   \begin{eqnarray}\label{j41}
  \nonumber \mathbb{E}\Big[\sup_{t\in[0,r]} |J_{41}(t)|^{p/2}\Big]&\leq& 3^{p/2-1}\Big( \mathbb{E}\Big[\sup_{t\in[0,r]} |J_{411}(t)|^{p/2}\Big]+\mathbb{E}\Big[\sup_{t\in[0,r]} |J_{412}(t)|^{p/2}\Big]\\
\nonumber  &&+\mathbb{E}\Big[\sup_{t\in[0,r]} |J_{413}(t)|^{p/2}\Big]\Big)\\
  &\leq& 3^{p/2-1}\kappa_2\mathbb{E}\Big[\sup_{s\in[0,r]}|e(s)|^p\Big]+C\int_0^r\mathbb{E}|e(\check{s})|^pds+C\Delta^p.
  \end{eqnarray}
  Combining \eqref{j41}, \eqref{rf} with \eqref{j4} gives
   \begin{eqnarray}\label{j40}
     \nonumber   \mathbb{E}\Big[\sup_{t\in[0,r]} |J_4(t)|^{p/2}\Big] &\leq&3^{p-2} \kappa_2\mathbb{E}\Big[\sup_{s\in[0,r]}|e(s)|^p\Big]+C\int_0^r\mathbb{E}|e(\check{s})|^pds+C\Delta^p+C\int_0^r\mathbb{E}|e(s)|^pds.
    \end{eqnarray}
    Let $\kappa=(\kappa_1+3^{p-2} \kappa_2)6^{p/2-1}$. Combining \eqref{j1}, \eqref{j2}, \eqref{j3}, \eqref{j40} and \eqref{z-x2} produces
     \begin{eqnarray*}
     \nonumber   \mathbb{E}\Big[\sup_{t\in[0,r]} |e(t)|^{p}\Big] &\leq& 6^{p/2-1}\sum_{l=1}^4\mathbb{E}\Big[\sup_{t\in[0,r]} |J_l(t)|^{p/2}\Big]+C\Delta^p+C\int_0^r|e(s)|^pds\\
     &\leq& \kappa\mathbb{E}\Big[\sup_{s\in[0,r]}|e(s)|^p\Big]+C\Delta^p+C\int_0^r\mathbb{E}\Big[\sup_{u\in[0,s]}|e(u)|^p\Big]ds.
    \end{eqnarray*}
   Therefore, letting $\kappa_1$ and $\kappa_2$ be sufficiently small such that $\kappa<1$, we have
     \begin{eqnarray}\label{e}
    \mathbb{E}\Big[\sup_{t\in[0,r]} |e(t)|^{p}\Big] \leq C\int_0^r\mathbb{E}\Big[\sup_{u\in[0,s]} |e(s)|^p\Big]ds+C\Delta^p.
    \end{eqnarray}
    Thus, the result \eqref{convergencesst} follows from the Gronwall inequality. The desired assertion \eqref{convergencestm} follows from \eqref{convergencesst} and Lemma \ref{lemma numer bound}.
 \end{proof}
 For $\theta\in[0,1/2]$, we can use the similar techniques to obtain the following convergence theorem. Its proof is omitted.
\begin{theorem}\label{thconv012}
Let Assumptions \ref{onesided assumption}, \ref{assu further condition} and \ref{assu daoshu} hold, $\theta\in[0,1/2)$ and let $\Delta<\Delta_1=1/(2\theta\beta)$ $(\Delta_1=\infty$, if $\theta=0)$. If the function $f$ satisfies the linear growth condition \eqref{f linear growth}, then for any $p\geq 2$ we have
  \begin{equation*}
   \mathbb{E}\Big[\sup_{t\in[0,T]}|x(t)-z(t)|^p\Big]\leq C\Delta^p
  \end{equation*}
  and
  \begin{equation*}
   \mathbb{E}\Big[\sup_{t\in[0,T]}|x(t)-y(t)|^p\Big]\leq C\Delta^p.
  \end{equation*}
\end{theorem}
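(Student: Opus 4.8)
The plan is to replay the proof of Theorem \ref{th21} essentially verbatim, the single structural substitution being that every appeal to Theorem \ref{thbound121} is replaced by an appeal to Theorem \ref{th012}; this is the only place where the hypothesis that $f$ satisfies the linear growth condition \eqref{f linear growth} enters. Concretely, I would first record, for $\theta\in[0,1/2)$ and $\Delta<\Delta_1$, the analogues of Lemmas \ref{lemmaboundfgfnumer}--\ref{lemma rf}. Each of these follows as before from the polynomial growth bound \eqref{f12}, the global Lipschitz property of $g$ and $L^1g$, Lemma \ref{lemma exact bound}, the coercivity estimate \eqref{1222} (valid whenever $\Delta<\Delta_1$), and the now-available uniform $p$th moment bounds \eqref{boundy1}--\eqref{boundz1}; none of the proofs of those lemmas uses $\theta\ge 1/2$. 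In particular one obtains $\sup_k\mathbb{E}|f(y_k)|^p\vee\sup_k\mathbb{E}|f'(y_k)|^p\vee\sup_k\mathbb{E}|f''(y_k)|^p\le C$, the continuous-time moment bounds $\mathbb{E}[\sup_{t\le T}|z(t)|^p]\vee\mathbb{E}[\sup_{t\le T}|y(t)|^p]\le C$ and $\mathbb{E}[\sup_{t\le T}|z(t)-y(t)|^p]\le C\Delta^p$, the one-step regularity estimates $\mathbb{E}|x(t)-x(\check{t})|^p\vee\mathbb{E}|z(t)-z(\check{t})|^p\le C\Delta^{p/2}$, and the remainder bound $\mathbb{E}|R_t(f)|^p\le C\Delta^p$.

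With these in hand, set $e(t)=x(t)-z(t)$, so that $e(0)=\theta\Delta f(x(0))$, and apply It\^o's formula to $|e(t)|^2$ exactly as in \eqref{z-x2}. The one-sided Lipschitz condition \eqref{onesidedf} is used on the pair $e(s)=x(s)-z(s)$ and $f(x(s))-f(z(s))$ via the decomposition
\[
2\langle e(s), f(x(s))-f(y(\check{s}))\rangle = 2\langle e(s), f(x(s))-f(z(s))\rangle + 2\langle e(s), f(z(s))-f(z(\check{s}))\rangle + 2\langle e(s), f(z(\check{s}))-f(y(\check{s}))\rangle,
\]
which produces the term $2\mu\int_0^t|e(s)|^2\,ds$ for every $\theta\in[0,1]$ together with the pieces that become $J_3(t)$ and $J_4(t)$; the initial error contributes $\theta^2\Delta^2|f(x(0))|^2$ as before. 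Then I would estimate $J_1,\dots,J_4$ term by term exactly as in the proof of Theorem \ref{th21}: $J_1$ and $J_2$ through the bound $\mathbb{E}|G(s)|^p\le C\mathbb{E}|e(\check{s})|^p+C\Delta^p$ obtained from It\^o's formula \eqref{gito} applied to $g(x(s))$ together with the moment lemmas above; $J_3$ through \eqref{f12}, H\"older's inequality and the estimate $\mathbb{E}[\sup_t|z(t)-y(t)|^p]\le C\Delta^p$; and $J_4$ through the Taylor expansion \eqref{taylor2}, splitting off the remainder $R_s(f)$ (controlled by the analogue of Lemma \ref{lemma rf}) and decomposing $J_{41}=J_{411}+J_{412}+J_{413}$ with $J_{413}=I_1+I_2+I_3$, each handled by the Burkholder--Davis--Gundy inequality, H\"older's inequality and Fubini's theorem precisely as in \eqref{j411}--\eqref{i3}.

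Collecting the estimates and choosing the free constants $\kappa_1,\kappa_2$ small enough that $\kappa=(\kappa_1+3^{p-2}\kappa_2)6^{p/2-1}<1$ yields
\[
\mathbb{E}\Big[\sup_{t\in[0,r]}|e(t)|^p\Big]\le \kappa\,\mathbb{E}\Big[\sup_{s\in[0,r]}|e(s)|^p\Big]+C\Delta^p+C\int_0^r\mathbb{E}\Big[\sup_{u\in[0,s]}|e(u)|^p\Big]\,ds;
\]
absorbing the first term on the right and invoking Gronwall's inequality gives $\mathbb{E}[\sup_{t\in[0,T]}|x(t)-z(t)|^p]\le C\Delta^p$, and the bound for $x(t)-y(t)$ follows from $z(t)=y(t)-\theta\Delta f(y(t))$ together with $\mathbb{E}|f(y(t))|^p\le C$. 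I expect the only genuine obstacle---and the reason \eqref{f linear growth} must be assumed---to be securing those uniform $p$th moment bounds: for $\theta\ge 1/2$ the drift-implicit part of the scheme supplies the coercivity that tames the superlinear drift automatically (this is what drives Theorem \ref{thbound121}), whereas for $\theta<1/2$ one needs \eqref{f linear growth} inside Theorem \ref{th012} to close the recursion. A direct inspection confirms that this is the single point at which the argument for $\theta\in(1/2,1]$ must be modified rather than copied.
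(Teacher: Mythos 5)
Your proposal is correct and coincides with the paper's intended argument: the paper omits the proof of Theorem~\ref{thconv012} entirely, stating only that ``similar techniques'' to those of Theorem~\ref{th21} apply, and your observation that the sole required modification is to replace the appeal to Theorem~\ref{thbound121} by Theorem~\ref{th012} (which is where the linear growth condition \eqref{f linear growth} enters) is exactly right. A check of the proof of Theorem~\ref{th21} confirms that nothing else there uses $\theta\ge 1/2$ beyond the moment bounds, so the rest of the argument, including the estimates \eqref{j1}--\eqref{i3} and the coercivity inequality \eqref{1222} (valid for $\Delta<\Delta_1$), carries over verbatim.
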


\begin{remark}
{\rm  Convergence for $\theta\in(1/2,1]$ in Theorem \ref{th21} are partially based on the Assumption \ref{onesided assumption}. This does not imply that condition \eqref{lipschitzg} in Assumption \ref{onesided assumption} is necessary. For example, Kloeden and Neuenkirch \cite{KN2012} showed that the semi-implicit Milstein scheme (STM scheme with $\theta=1$) converges to the exact solution for the Cox-Ingersoll-Ross process, whose diffusion term is non-Lipschitz continuous.}
\end{remark}
%
\begin{remark}
{\rm Higham and his coauthors \cite{HMS2002, HK2009} showed that the split step backward Euler and backward Euler schemes strongly converge to the exact solution with the standard order $0.5$. In fact, by the similar skills used in the proof of Theorem \ref{th21}, we can show that the convergence rate $0.5$ holds for theta-Euler schemes (split-step theta and stochastic theta schemes) with $\theta\in(1/2,1]$. Moreover, for the SODE with additive noise, that is, $g(x)$ is a constant, then $L^1g(x)=0$, the theta-Milstein schemes are degenerated to the theta-Euler schemes. Hence, our convergence results show that the theta-Euler schemes ($\theta\in(1/2,1]$) is convergent with order $1$, which is a new result for the SODE with one-sided Lipschitz and polynomial Lipschitz continuous drift. }
\end{remark}

In this section, although the strong convergence results for the theta-Milstein schemes are established under Assumptions \ref{onesided assumption}, \ref{assu further condition} and \ref{assu daoshu}, the following exponential mean-square stability only depends on the one-sided  Lipschitz condition \eqref{onesidedf} and the local Lipschitz condition.

\section{Stability analysis of the theta-Milstein schemes for SDEs}
\setcounter{equation}{0}
 For the purpose of stability, assume that $f(0)=g(0)=0$. This shows that (\ref{SDE}) admits a trivial solution. Then inequality \eqref{Lg} in Assumption \ref{assu further condition} becomes
 \begin{equation}\label{lgcondition}
  |L^1g(x)|^2\leq \sigma |x|^2.
 \end{equation}
In this section, we assume that $f$ and $g$ satisfy the following local Lipschitz condition, which is classical for the nonlinear SDEs.
\begin{assumption}~(Local Lipschitz condition)\label{local Lipschitz condition}
$f$ and $g$ satisfy the local Lipschitz condition,  that is, for each $j>0$ there exists a positive constant $K_j$ such that for any   $x, y\in \mathbb{R}^n$ with $|x|\vee|y| \leq j$,
\begin{equation}\label{local}
| f(x)-f(y)| \vee |g(x)-g(y)|  \leq K_j|x-y|.
\end{equation}
\end{assumption}

 To investigate stability of numerical approximations, let us firstly give the stability criterion of SDE \eqref{SDE} (also see \cite{M1997}):
\begin{theorem}\label{th exact}
  Let Assumption \ref{local Lipschitz condition} hold. If there exists a positive constants $\gamma$ such that for all $x,y\in\mathbb{R}^{n}$,
  \begin{equation}\label{coupled condition}
   2x^Tf(x)+|g(x)|^2\leq -\gamma |x|^2,
  \end{equation}
  then the solution of \eqref{SDE} has the property
  \begin{equation}\label{exponent}
   \mathbb{E}|x(t)|^2\leq C(x_0)e^{-\gamma t},
  \end{equation}
  where $C(x_0)$ is a positive constant dependent on the initial data $x_0$.
\end{theorem}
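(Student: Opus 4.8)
The plan is to establish \eqref{exponent} via a classical Lyapunov / It\^{o}-formula argument applied to $V(x)=|x|^2$, combined with a localization (stopping-time) procedure needed because only a \emph{local} Lipschitz condition is assumed, while the dissipativity hypothesis \eqref{coupled condition} is the only global control available. First I would recall that Assumption \ref{local Lipschitz condition} together with \eqref{coupled condition} guarantees existence and uniqueness of a global solution; indeed, \eqref{coupled condition} yields $2x^Tf(x)+|g(x)|^2\le -\gamma|x|^2\le 0$, which is a Khasminskii-type condition ruling out explosion. Hence $x(t)$ is well defined for all $t\ge0$ and, by a standard moment estimate, $\mathbb{E}|x(t)|^2<\infty$ for every $t$.

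Next I would apply It\^{o}'s formula to $e^{\gamma t}|x(t)|^2$. Writing this out,
\begin{equation}\label{stabproof1}
 e^{\gamma t}|x(t)|^2 = |x_0|^2+\int_0^t e^{\gamma s}\big(\gamma|x(s)|^2+2x(s)^Tf(x(s))+|g(x(s))|^2\big)\,ds+2\int_0^t e^{\gamma s}x(s)^Tg(x(s))\,dw(s).
\end{equation}
By the dissipativity condition \eqref{coupled condition}, the integrand of the Lebesgue integral is $\le e^{\gamma s}\big(\gamma|x(s)|^2-\gamma|x(s)|^2\big)=0$. To handle the stochastic integral rigorously I would introduce the stopping times $\tau_j=\inf\{t\ge0:|x(t)|\ge j\}$, replace $t$ by $t\wedge\tau_j$ so that the integrand of the martingale part is bounded on $[0,t\wedge\tau_j]$ and the stochastic integral has zero expectation, then take expectations to obtain $\mathbb{E}\big[e^{\gamma(t\wedge\tau_j)}|x(t\wedge\tau_j)|^2\big]\le |x_0|^2$. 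Letting $j\to\infty$ and using Fatou's lemma (together with $\tau_j\to\infty$ a.s., which follows from non-explosion) gives $e^{\gamma t}\mathbb{E}|x(t)|^2\le |x_0|^2$, i.e.\ \eqref{exponent} with $C(x_0)=|x_0|^2$.

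The main obstacle is the justification of the zero-expectation property of the stochastic integral under only a local Lipschitz hypothesis: one cannot directly assert $\mathbb{E}\int_0^t e^{\gamma s}x(s)^Tg(x(s))\,dw(s)=0$ because $|g(x(s))|$ need not have controlled moments a priori. The stopping-time truncation described above is precisely the device that circumvents this; the remaining care is to confirm that $\tau_j\to\infty$ almost surely, which is a consequence of the non-explosion already guaranteed by \eqref{coupled condition} via Khasminskii's test. Everything else — the application of It\^{o}'s formula on $[0,t\wedge\tau_j]$, the sign analysis of the drift term, and the passage to the limit — is routine. If one prefers to avoid Fatou, an alternative is to note that $\mathbb{E}|x(t)|^2$ is finite and locally bounded, so the stochastic integral in \eqref{stabproof1} is a genuine martingale on each finite interval, giving the bound directly; I would mention this as a remark but carry out the stopping-time version for self-containedness.
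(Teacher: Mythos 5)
Your proof is correct and is exactly the standard Lyapunov argument (It\^{o}'s formula applied to $e^{\gamma t}|x(t)|^2$, sign analysis of the drift via \eqref{coupled condition}, localization by the first exit times $\tau_j$, and Fatou's lemma) that the paper itself does not reproduce but delegates to the cited reference \cite{M1997}. The localization step you emphasize is indeed the only delicate point under a merely local Lipschitz hypothesis, and your treatment of it, including the observation that non-explosion follows from the Khasminskii-type inequality $2x^Tf(x)+|g(x)|^2\leq 0$, is sound.
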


Under the one-sided Lipschitz condition \eqref{onesidedf}, we define
 $$
 \Delta^*= \left\{
    \begin{array}{ll}
     \displaystyle \frac{1}{\mu\theta},\ \ \ \ & \theta\in(0,1),\ \mbox{and}\ \mu>0, \\
      \infty, &\theta=0 \ \mbox{or}\ \mu<0.
    \end{array}
  \right.
$$
When $\Delta<\Delta^*$, the two classes of theta-Milstein schemes are well defined. Moreover, we have the following stability result.
\begin{theorem}\label{thSSST}
Let the conditions in Theorem \ref{th exact} hold. Under condition \eqref{lgcondition} and the one-sided  Lipschitz condition \eqref{onesidedf}, the following two stability results hold.
    \begin{description}
     \item[$(i)$]  Let $\theta \in[0,1/2]$. If there exists a positive constant $K$ such that for any $x\in\mathbb{R}^n$,
    \begin{equation}\label{linear growth}
     |f(x)|^2\leq K|x|^2,
    \end{equation}
    then for any $\Delta<\Delta_1^*=\displaystyle\frac{\gamma}{(1-2\theta)K+0.5\sigma}\wedge\Delta^*$, $k\in\mathbb{N}_+$,
      \begin{equation*}
    \mathbb{E}|z_k|^2\leq C(x_0)e^{-\gamma_{\Delta}k\Delta} \quad {\rm{and}} \quad \mathbb{E}|y_k|^2\leq C(x_0)e^{-\gamma_{\Delta}k\Delta},
    \end{equation*}
    where $\displaystyle\gamma_{\Delta}=-\frac{1}{\Delta}\log\Big(1-\frac{\gamma-(1-2\theta)K\Delta-0.5\sigma\Delta}{(1+\theta\Delta\sqrt{K})^2}\Delta\Big).$
    \item[$(ii)$] Let $\theta \in(1/2,1]$. Then for any $\displaystyle\Delta<\Delta_2^*=\frac{2\gamma}{\sigma}\wedge\Delta^*$,  $k\in\mathbb{N}_+$,
    \begin{equation*}
    \mathbb{E}|z_k|^2\leq C(x_0)e^{-\gamma_{\Delta}k\Delta} \quad {\rm{and}} \quad \mathbb{E}|y_k|^2\leq C(x_0)e^{-\gamma_{\Delta}k\Delta},
    \end{equation*}
    where $\displaystyle\gamma_{\Delta}=-\frac{1}{\Delta}\log\Big(1-\frac{(2\theta-1)(\gamma-0.5\sigma\Delta)\Delta}{2\theta-1+(\gamma-0.5\sigma\Delta)\Delta\theta^2}\Big)$.

  \end{description}
\end{theorem}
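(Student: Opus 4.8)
The plan is to establish a one-step geometric decay $\mathbb{E}|z_{k+1}|^2\le e^{-\gamma_\Delta\Delta}\,\mathbb{E}|z_k|^2$ and then iterate. I would begin from the identity $z_{k+1}=y_k+(1-\theta)\Delta f(y_k)+g(y_k)\Delta w_k+\tfrac12 L^1g(y_k)(|\Delta w_k|^2-\Delta)$, obtained by substituting $z_k=y_k-\theta\Delta f(y_k)$ into the second line of \eqref{SSTM}. Squaring, taking $\mathbb{E}[\,\cdot\mid\mathfrak{F}_k]$ (using that $y_k$ is $\mathfrak{F}_k$-measurable while $\Delta w_k$ is independent with $\mathbb{E}\Delta w_k=0$, $\mathbb{E}|\Delta w_k|^2=\Delta$, $\mathbb{E}[\Delta w_k(|\Delta w_k|^2-\Delta)]=0$, $\mathbb{E}(|\Delta w_k|^2-\Delta)^2=2\Delta^2$), and re-expressing $|y_k|^2$ through $z_k=y_k-\theta\Delta f(y_k)$ once more, I expect to reach
\[
\mathbb{E}\big[|z_{k+1}|^2\mid\mathfrak{F}_k\big]=|z_k|^2+\Delta\big(2\langle y_k,f(y_k)\rangle+|g(y_k)|^2\big)+(1-2\theta)\Delta^2|f(y_k)|^2+\tfrac12\Delta^2|L^1g(y_k)|^2.
\]
Then \eqref{coupled condition} makes the bracket $\le-\gamma|y_k|^2$ and \eqref{lgcondition} bounds the last term by $\tfrac12\sigma\Delta^2|y_k|^2$, so $\mathbb{E}[|z_{k+1}|^2\mid\mathfrak{F}_k]\le|z_k|^2-(\gamma-\tfrac12\sigma\Delta)\Delta|y_k|^2+(1-2\theta)\Delta^2|f(y_k)|^2$.

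From here the two regimes split. In case $(i)$, $\theta\in[0,1/2]$, the coefficient $1-2\theta\ge0$, so I would use \eqref{linear growth} to write $(1-2\theta)\Delta^2|f(y_k)|^2\le(1-2\theta)K\Delta^2|y_k|^2$ and collect all $|y_k|^2$-terms; then, since $|z_k|\le|y_k|+\theta\Delta|f(y_k)|\le(1+\theta\Delta\sqrt K)|y_k|$, replace $|y_k|^2$ from below by $|z_k|^2/(1+\theta\Delta\sqrt K)^2$. This produces exactly $\mathbb{E}[|z_{k+1}|^2\mid\mathfrak{F}_k]\le\big(1-\tfrac{\gamma-(1-2\theta)K\Delta-0.5\sigma\Delta}{(1+\theta\Delta\sqrt K)^2}\Delta\big)|z_k|^2$, i.e.\ the stated $\gamma_\Delta$. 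In case $(ii)$, $\theta\in(1/2,1]$, the term $(1-2\theta)\Delta^2|f(y_k)|^2$ is $\le0$ and cannot be discarded; writing $a=|y_k|^2$, $c=|f(y_k)|^2$, $\eta=(\gamma-0.5\sigma\Delta)\Delta$, I have $\mathbb{E}[|z_{k+1}|^2\mid\mathfrak{F}_k]\le|z_k|^2-\eta a-(2\theta-1)\Delta^2c$, while Cauchy--Schwarz applied to $z_k=y_k-\theta\Delta f(y_k)$ gives $|z_k|^2\le(\sqrt a+\theta\Delta\sqrt c)^2$. The point I would stress is that the constant $e^{-\gamma_\Delta\Delta}=\tfrac{2\theta-1+\eta(\theta-1)^2}{2\theta-1+\eta\theta^2}$ in the statement is precisely the one for which the desired inequality $|z_k|^2-\eta a-(2\theta-1)\Delta^2c\le e^{-\gamma_\Delta\Delta}|z_k|^2$ reduces, after clearing the denominator and cancelling common terms, to the (tight) AM--GM inequality $\tfrac{\eta^2\theta^2}{2\theta-1}a+\tfrac{2\theta-1}{\theta^2}(\theta\Delta\sqrt c)^2\ge 2\eta\sqrt a\,\theta\Delta\sqrt c$; so this case closes as well.

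Once $\mathbb{E}|z_{k+1}|^2\le e^{-\gamma_\Delta\Delta}\mathbb{E}|z_k|^2$ is available, iterating gives $\mathbb{E}|z_k|^2\le e^{-\gamma_\Delta k\Delta}|z_0|^2$, which is the first assertion with $C(x_0)=|x_0-\theta\Delta f(x_0)|^2$; along the way I would verify that the one-step factor lies in $(0,1)$ on the indicated $\Delta$-ranges --- for $(ii)$ this is immediate from $\theta>1/2$, and for $(i)$ from the elementary fact $\gamma\le2\sqrt K$ (a consequence of \eqref{coupled condition} and \eqref{linear growth}) via a short estimate of a downward parabola in $\Delta$. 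For the $y_k$-bounds I would take the inner product of $y_k=z_k+\theta\Delta f(y_k)$ with $y_k$ and use \eqref{onesidedf} together with $f(0)=0$ to get $(1-\theta\mu^+\Delta)|y_k|\le|z_k|$ with $\mu^+=\mu\vee0$; since $\Delta<\Delta^*$ this yields $\mathbb{E}|y_k|^2\le(1-\theta\mu^+\Delta)^{-2}\mathbb{E}|z_k|^2$, and absorbing the constant into $C(x_0)$ finishes both parts.

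The main obstacle I anticipate is case $(ii)$: recognising that the exact contraction constant in the theorem is forced by the sharp AM--GM bound arising from the implicit relation $z_k=y_k-\theta\Delta f(y_k)$ combined with the favourably signed but non-negligible term $(1-2\theta)\Delta^2|f(y_k)|^2$ --- any cruder handling of that term would give a strictly worse decay rate, so the bookkeeping must be done so as to reproduce $\gamma_\Delta$ exactly. A minor technical point is the finiteness of $\mathbb{E}|z_k|^2$ at each step: in case $(i)$ it is automatic because \eqref{coupled condition} with \eqref{linear growth} makes $f$, $g$ (and $L^1g$, by \eqref{lgcondition}) of linear growth, while in case $(ii)$ it can be supplied by a routine truncation/localization argument before letting the truncation level tend to infinity.
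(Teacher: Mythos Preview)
Your proposal is correct and reaches the same one-step recursion
\[
|z_{k+1}|^2 \le |z_k|^2 - (\gamma-\tfrac12\sigma\Delta)\Delta\,|y_k|^2 + (1-2\theta)\Delta^2|f(y_k)|^2 + m_k
\]
that the paper derives; case~$(i)$ is then handled identically. In case~$(ii)$ you take a genuinely different, and arguably cleaner, route: you keep $a=|y_k|^2$ and $c=|f(y_k)|^2$ as separate variables, bound $|z_k|^2\le(\sqrt a+\theta\Delta\sqrt c)^2$ by Cauchy--Schwarz, and then close with a tight AM--GM inequality that forces the stated $\gamma_\Delta$. The paper instead substitutes $\Delta f(y_k)=(y_k-z_k)/\theta$ to eliminate $f$ entirely, obtaining
\[
|z_{k+1}|^2\le \frac{(1-\theta)^2}{\theta^2}|z_k|^2+\Big[\frac{1-2\theta}{\theta^2}-\eta\Big]|y_k|^2+\frac{4\theta-2}{\theta^2}\,z_k^Ty_k+m_k,
\]
and then applies a weighted Young inequality to the cross term $2z_k^Ty_k$ with weights chosen so that the $|y_k|^2$ coefficient vanishes; the same contraction factor drops out. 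Your argument has the advantage of making transparent \emph{why} this particular $\gamma_\Delta$ is optimal (it is the equality case of AM--GM), while the paper's substitution avoids the Cauchy--Schwarz loss and works purely with equalities until the final Young step. For the $y_k$ bound the paper uses the coupled condition \eqref{coupled condition} directly to get $(1+\gamma\theta\Delta)|y_k|^2\le|z_k|^2$, which is a bit sharper than your one-sided-Lipschitz estimate, but either suffices. Finally, your remark that case~$(ii)$ needs a localization to justify finiteness of second moments is exactly right: the paper carries this out explicitly via a stopping time $\lambda_l=\inf\{i:|y_i|\vee|z_i|>l\}$ and Fatou's lemma, and your proof would have to do the same before taking conditional expectations.
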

\begin{proof}
For any large $l>0$, define the stoping time
\begin{equation}\label{c3stoptdef}
\lambda_l=\inf\{i>0: |y_i|>l \ \ {\rm{or}}\ \ |z_i|>l\}.
\end{equation}
It is observed that $|y_{k-1}|\leq l$, $|z_{k-1}|\leq l$ for $k\in[0,\lambda_l]$. By \eqref{SSTM},
$$|z_{k}|^2 \leq  4\Big(|z_{k-1}|^2+|f(y_{k-1})|^2\Delta^2+|g(y_{k-1})|^2|\Delta w_{k-1}|^2+ \frac{1}{4}|L^1g(y_{k-1})|^2||\Delta w_{k-1}|^2-\Delta|^2           \Big).$$
Then we can obtain that
\begin{eqnarray*}
\mathbb{E}[|z_{k}|^2\mathbf{1}_{[0,\lambda_l]}(k)] &\leq& 4l^2+4\mathbb{E}[|f(y_{k-1})|^2\mathbf{1}_{[0,\lambda_l]}(k)]\Delta^2+4\mathbb{E}[|g(y_{k-1})|^2|\Delta w_{k-1}|^2\mathbf{1}_{[0,\lambda_l]}(k)]\\
&&+\mathbb{E}[|L^1g(y_{k-1})|^2||\Delta w_{k-1}|^2-\Delta|^2 \mathbf{1}_{[0,\lambda_l]}(k)]\\
&\leq& 4l^2+4\mathbb{E}[|f(y_{k-1})|^2\mathbf{1}_{[0,\lambda_l]}(k)]\Delta^2+4\Big(\mathbb{E}[|g(y_{k-1})|^4\mathbf{1}_{[0,\lambda_l]}(k)]\mathbb{E}[|\Delta w_{k-1}|^4]\Big)^{1/2}\\
&&+\Big(\mathbb{E}[|L^1g(y_{k-1})|^4 \mathbf{1}_{[0,\lambda_l]}(k)]\mathbb{E}[|(\Delta w_{k-1})^2-\Delta|^4]\Big)^{1/2},
\end{eqnarray*}
where we used H\"{o}lder's inequality. Making use of the property $\mathbb{E}[|\Delta w_k|^{2i}]=(2i-1)!!\Delta^i$, we have  $\mathbb{E}[|\Delta w_{k-1}|^4]=3\Delta^2$ and $\mathbb{E}[|(\Delta w_{k-1})^2-\Delta|^4]\leq 2^3(\mathbb{E}[|\Delta w_{k-1}|^8]+\Delta^4)=2^3(7!!+1)\Delta^4$, where $(2i-1)!!=(2i-1)(2i-3)\cdots 3\cdot1$ for $i=1, 2, \ldots$.
By  Assumption \ref{local Lipschitz condition} and \eqref{lgcondition}, it is easy to deduce that $|f(y_{k-1})|^2\mathbf{1}_{[0,\lambda_l]}(k)\leq K_l^2l^2$, $|g(y_{k-1})|^4\mathbf{1}_{[0,\lambda_l]}(k)\leq K_l^4l^4$ and $|L^1g(y_{k-1})|^4 \mathbf{1}_{[0,\lambda_l]}(k)\leq \sigma^2 l^4$. Therefore,
\begin{equation}\label{524b}
 \mathbb{E}[|z_{k}|^2\mathbf{1}_{[0,\lambda_l]}(k)] \leq 4l^2+4K_l^2l^2\Delta^2+8K_l^2l^2\Delta+4\sqrt{7!!+1}\sigma l^2\Delta^2=:K(l),
\end{equation}
 which together with  $z_k=y_k-\theta \Delta f(y_k,y_{k-m})$ and $ 2x^Tf(x)\leq -\gamma |x|^2$ yields
 \begin{equation}\label{z>yk}
 |z_{k}|^2 =  |y_{k}|^2-2\theta\Delta y_{k}^Tf(y_{k})+|f(y_{k})|^2\Delta^2\theta^2\geq (1+\gamma\theta\Delta)|y_{k}|^2.
 \end{equation}
Hence,
  \begin{equation}\label{aaaa}
  \mathbb{E}[|y_k|^2\mathbf{1}_{[0,\lambda_l]}(k)]\leq  K(l).
  \end{equation}
By \eqref{SSTM},
 \begin{eqnarray*}
  \nonumber  |z_{k+1}|^2 &=& |z_k|^2+|f(y_k)|^2\Delta ^2+\frac{1}{4}|L^1g(y_k)(|\Delta w_k|^2-\Delta)|^2+\langle g(y_k)\Delta w_k,L^1g(y_k)(|\Delta w_k|^2-\Delta)\rangle\\
 \nonumber  &&+2\langle z_k+f(y_k)\Delta,g(y_k)\Delta w_k+\frac{1}{2}L^1g(y_k)(|\Delta w_k|^2-\Delta)\rangle+|g(y_k)|^2|\Delta w_k|^2+2\Delta z_k^Tf(y_k)\\
&=& |z_k|^2+2\Delta y_k^Tf(y_k)+|g(y_k)|^2\Delta+(1-2\theta)|f(y_k)|^2\Delta^2+\frac{1}{2}\Delta^2|L^1g(y_k)|^2+m_k,
\end{eqnarray*}
where $m_k=2\langle y_k+(1-\theta)f(y_k)\Delta,g(y_k)\Delta w_k+\frac{1}{2}L^1g(y_k)(|\Delta w_k|^2-\Delta)\rangle+|g(y_k)|^2(|\Delta w_k|^2-\Delta)+\langle g(y_k)\Delta w_k$, $L^1g(y_k)(|\Delta w_k|^2-\Delta)\rangle+\frac{1}{4}|L^1g(y_k)|^2[(|\Delta w_k|^2-\Delta)^2-2\Delta^2]$.
Using condition \eqref{lgcondition} and the coupled condition \eqref{coupled condition}, we get
 \begin{equation}\label{main2}
  |z_{k+1}|^2 \leq  |z_k|^2-\gamma\Delta |y_k|^2 +(1-2\theta)\Delta^2 |f(y_k)|^2+0.5\Delta^2\sigma|y_k|^2+m_k.
\end{equation}

\ \textbf{Case (i)}\ $\theta\in[0, 1/2)$: \ \
Note that the linear growth condition of $f$ and \eqref{coupled condition} imply
\begin{equation}\label{c2q11}
 |g(x)|^2 \leq -2x^Tf(x)\leq  |x|^2+|f(x)|^2\leq (1+K)|x|^2.
\end{equation}
 From conditions \eqref{lgcondition}, \eqref{linear growth} and \eqref{c2q11}, there is a positive constant $\bar{C}$ such that
\begin{equation}\label{524a}
|m_k|\leq [\bar{C}+\bar{C}|\Delta w_k|^2+\bar{C}|\Delta w_k|^4]|y_k|^2.
\end{equation}
 Noting that $y_k$ and $\mathbf{1}_{[0,\lambda_l]}(k)$ are $\mathfrak{F}_{k\Delta}$- measurable while $\Delta w_k$ is independent of $\mathfrak{F}_{k\Delta}$, then we obtain from \eqref{aaaa} that $\mathbb{E}[ m_k \mathbf{1}_{[0,\lambda_l]}(k)]=0$.  The linear growth condition \eqref{linear growth} gives
\begin{eqnarray}\label{maina}
 |z_{k+1}|^2 \leq |z_k|^2+[(1-2\theta)K\Delta+0.5\sigma\Delta-\gamma]\Delta |y_k|^2+m_k.
\end{eqnarray}
Then for $\Delta<\Delta^*_1$ and sufficiently large $l$, by \eqref{main2} and using linear growth condition \eqref{linear growth}, we have
\begin{eqnarray}\label{c2xyz9}
\nonumber  |z_{k\wedge\lambda_l}|^2  &\leq& |z_0|^2+[(1-2\theta)K\Delta+0.5\sigma\Delta-\gamma]\Delta \sum_{i=0}^{(k\wedge\lambda_l)-1}|y_{i}|^2+\sum_{i=0}^{(k\wedge\lambda_l)-1}m_i\\
&\leq&|z_0|^2+ \sum_{i=0}^{k-1}\mathbf{1}_{[0, \lambda_l]}(i)m_i.
\end{eqnarray}
Hence, for $\Delta<\Delta_1^*$, $\mathbb{E}[|z_{k\wedge\lambda_l}|^2]\leq\mathbb{E}|z_0|^2=\mathbb{E}|y_0-\theta\Delta f(y_0)|^2=:\bar{K}$.
Note that
\begin{equation}\label{c2sdea}
\mathbb{E}[|z_{\lambda_l}|^2\mathbf{1}_{\{\lambda_l<k\}}]\leq\mathbb{E}[ |z_{k\wedge\lambda_l}|^2]\leq \bar{K}.
\end{equation}
We now claim $|z_{\lambda_l}|>l$. Otherwise, $|z_{\lambda_l}|\leq l$. By the definition of $\lambda_l$, $|y_{\lambda_l}|>l$ and $|y_{\lambda_l-i}|\leq l$ for $i>0$. Then by $z_k=y_k-\theta \Delta f(y_k)$,
\begin{eqnarray*}
|z_{\lambda_l}|^2&=& |y_{\lambda_l}|^2-2\theta\Delta y_{\lambda_l}^Tf(y_{\lambda_l})+|f(y_{\lambda_l})|^2\Delta^2\theta^2\\
&\geq&(1+\gamma\theta\Delta)|y_{\lambda_l}|^2\\
&>&(1+\gamma\theta\Delta)l^2\geq l^2,
\end{eqnarray*}
which is a contradiction.
By \eqref{c2sdea}, for any $k>0$
\begin{equation}\label{c2Linf}
  \mathbb{P}\{\lambda_l<k\}\leq \frac{\bar{K}}{l^2}\rightarrow 0, \ \text{µ±} \ l\rightarrow\infty,
\end{equation}
that is, as $l\rightarrow\infty$, $\lambda_l\uparrow\infty$ a.s. Define $\mu_{\Delta}= \gamma-(1-2\theta)K\Delta-0.5\sigma\Delta$. For $\Delta<\Delta_1^*$, $\mu_{\Delta}>0$. Using the linear growth condition \eqref{linear growth} gives
\begin{equation*}
|z_k|^2=|y_k-\theta f(y_k)\Delta|^2\leq (1+\theta\Delta\sqrt{K})^2|y_k|^2.
\end{equation*}
Substituting this into \eqref{maina} yields that for  $\Delta<\Delta_1^*$
\begin{eqnarray}\label{mainrt}
 \nonumber  |z_{k+1}|^2 \leq \Big(1-\frac{\mu_{\Delta}\Delta}{(1+\theta\Delta\sqrt{K})^2}\Big)  |z_k|^2+m_k,
\end{eqnarray}
which implies
\begin{eqnarray*}
 e^{\gamma_{\Delta}k\Delta} |z_k|^2\leq |z_0|^2 +\sum_{j=0}^{k-1}e^{\gamma_{\Delta}j\Delta}m_j.
\end{eqnarray*}
where $\gamma_{\Delta}=-\frac{1}{\Delta}\log\Big(1-\frac{\mu_{\Delta}\Delta}{(1+\theta\Delta\sqrt{K})^2}\Big)$. Replacing $k$ by $k\wedge\lambda_l$ yields
$$
 e^{\gamma_{\Delta}(k\wedge\lambda_l)\Delta} |z_{k\wedge\lambda_l}|^2\leq|z_0|^2 +\sum_{j=0}^{k-1}e^{\gamma_{\Delta}j\Delta}\mathbf{1}_{[0,\lambda_l]}(j)m_j.
$$
Taking expectation gives
 \begin{eqnarray*}
  \mathbb{E}[e^{\gamma_{\Delta}(k\wedge\lambda_l)\Delta} |z_{k\wedge\lambda_l}|^2]\leq \mathbb{E}|z_0|^2.
\end{eqnarray*}
By Fatou's Lemma, letting $l\rightarrow\infty$, we have
 \begin{eqnarray}\label{c200}
e^{\gamma_{\Delta}k\Delta} \mathbb{E}[|z_{k}|^2]\leq \mathbb{E}|z_0|^2,
\end{eqnarray}
which together with \eqref{z>yk} implies the desired assertion.

\textbf{Case (ii)}\ $\theta\in(1/2, 1]$:
Since $\theta\in(1/2,1]$, by $\Delta f(y_k)=(y_k-z_k)/\theta$, we obtain from the definition of $ m_k$
\begin{eqnarray*}
\nonumber m_k&=&2\Big\langle y_k+\frac{1-\theta}{\theta}(y_k-z_k),g(y_k)\Delta w_k+\frac{1}{2}L^1g(y_k)(|\Delta w_k|^2-\Delta)\Big\rangle\\
\nonumber&&+|g(y_k)|^2(|\Delta w_k|^2-\Delta)+\langle g(y_k)\Delta w_k,L^1g(y_k)(|\Delta w_k|^2-\Delta)\rangle\\
&&+\frac{1}{4}|L^1g(y_k)|^2[(|\Delta w_k|^2-\Delta)^2-2\Delta^2]
\end{eqnarray*}
and from \eqref{coupled condition}
\begin{equation}
|g(y_k)|^2 \leq -2y_k^Tf(y_k)=-\frac{2}{\Delta\theta}[y_k^T(y_k-z_k)]\leq\frac{1}{\theta\Delta}(3|y_k|^2+|z_k|^2).
\end{equation}
This, together with \eqref{lgcondition}, produces that there is a positive constants $\underline{C}$ such that
\begin{equation}\label{524c}
|m_k|\leq \underline{C}[1+|\Delta w_k|^2+|\Delta w_k|^4]|y_k|^2+\underline{C}[1+ |\Delta w_k|^2]|z_k|^2.
\end{equation}
We therefore have $\mathbb{E}[ m_k \mathbf{1}_{[0,\lambda_l]}(k)]=0$.  By \eqref{main2} and  $\Delta f(y_k)=(y_k-z_k)/\theta$,
\begin{eqnarray}\label{mainooo}
|z_{k+1}|^2\leq\frac{(1-\theta)^2}{\theta^2}|z_k|^2+\Big[\frac{1-2\theta}{\theta^2}-(\gamma-0.5\sigma\Delta)\Delta\Big]|y_k|^2+\frac{4\theta-2}{\theta^2} z_k^Ty_k+m_k.
\end{eqnarray}
Note that $\Delta<\frac{2\gamma}{\sigma}$ implies $2\theta-1+(\gamma-0.5\sigma\Delta)\Delta\theta^2>0$. Since
$$
2z_k^Ty_k\leq \frac{2\theta-1+(\gamma-0.5\sigma\Delta)\Delta\theta^2}{2\theta-1}|y_k|^2+\frac{2\theta-1}{ 2\theta-1+(\gamma-0.5\sigma\Delta)\Delta\theta^2}|z_k|^2,
$$
we get from \eqref{mainooo}
\begin{eqnarray}\label{mainbc}
 \nonumber|z_{k+1}|^2 &\leq& \Big[\frac{(1-\theta)^2}{\theta^2}+\frac{(2\theta-1)^2}{\theta^2( 2\theta-1+(\gamma-0.5\sigma\Delta)\Delta\theta^2)}\Big]|z_k|^2+m_k\\
 &=&|z_k|^2+\frac{(1-2\theta)(\gamma-0.5\sigma\Delta)\Delta }{ 2\theta-1+(\gamma-0.5\sigma\Delta)\Delta\theta^2} |z_k|^2+m_k,
\end{eqnarray}
By the similar techniques used in case of $\theta\in[0,1/2]$, we can prove that $\lambda_l\uparrow\infty$ a.s., as $l\rightarrow\infty$. Note that \eqref{mainbc} implies
\begin{eqnarray*}
 e^{\gamma_{\Delta}k\Delta} |z_k|^2\leq |z_0|^2 +\sum_{j=0}^{k-1}e^{\gamma_{\Delta}j\Delta}m_j,
\end{eqnarray*}
where $\gamma_{\Delta}=-\frac{1}{\Delta}\log\Big(1-\frac{(2\theta-1)(\gamma-0.5\sigma\Delta)\Delta}{2\theta-1+(\gamma-0.5\sigma\Delta)\Delta\theta^2}\Big).$ Then we obtain the desired assertion by repeating the proof process of the case (i).
\end{proof}

\begin{remark}
{\rm  Theorem \ref{thSSST} shows that the two theta-Milstein schemes can share the exponential mean-square stability of the exact solution. In fact, the upper bound $\gamma$ of the decay rate can also be reproduced arbitrarily accurately for sufficiently small stepsize $\Delta$ since $\lim_{\Delta\rightarrow0}\gamma_{\Delta}=\gamma$.}
\end{remark}

Let us now examine the linear scalar system
\begin{equation}\label{linear SDE}
  dx(t)=\mu x(t)dt+cx(t)dw(t)
\end{equation}
It is known that mean-square stability for \eqref{linear SDE} is equivalent to
\begin{equation}\label{linear msstabilty}
2\mu+c^2<0.
\end{equation}
Moreover, it is easy to observe that the two classes of Milstein schemes are equivalents for the linear SDE \eqref{linear SDE}. Here we rewrite this scheme as follows:
\begin{equation}\label{linearMilstein}
  y_{k+1}=y_k+\theta\mu\Delta y_{k+1}+(1-\theta)\mu\Delta y_k+c y_k\Delta w_k+\frac{c^2}{2} y_k[|\Delta w_k|^2-\Delta].
\end{equation}

For the linear scalar SDE \eqref{linear SDE}, we have the following stability theorem.
\begin{theorem}\label{thlinear}
   Let condition \eqref{linear msstabilty} hold. If
   \begin{equation}\label{linearmuc}
    (2\mu+c^2)+\frac{1}{2}c^4\Delta+(1-2\theta)\mu^2\Delta<0,
   \end{equation}
    then the Milstein scheme \eqref{linearMilstein} holds
    \begin{equation*}
    \mathbb{E}|y_k|^2=e^{-\gamma_{\Delta}k\Delta} \mathbb{E}|x_0|^2,
    \end{equation*}
    where $\displaystyle\gamma_{\Delta}=-\frac{1}{\Delta}\log\Big(1+\frac{[2\mu+c^2+(1-2\theta)\mu^2\Delta+1/2c^4\Delta]\Delta}{(1-\theta\Delta\mu)^2}\Big).$
\end{theorem}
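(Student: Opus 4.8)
For the linear scalar equation \eqref{linear SDE} the implicit relation \eqref{linearMilstein} can be solved explicitly, which is the whole point. Condition \eqref{linear msstabilty} forces $\mu<0$ (indeed $\mu<-c^2/2\le0$), so $1-\theta\mu\Delta\ge1>0$ for every $\Delta>0$; the scheme is therefore well defined and satisfies $y_{k+1}=\rho_k y_k$, where
\[
\rho_k:=\frac{1+(1-\theta)\mu\Delta+c\,\Delta w_k+\tfrac{c^2}{2}(|\Delta w_k|^2-\Delta)}{1-\theta\mu\Delta}.
\]
The first step is to square this identity. Since $y_k$ is $\mathfrak{F}_{k\Delta}$-measurable while $\Delta w_k$ is independent of $\mathfrak{F}_{k\Delta}$, taking conditional expectation gives $\mathbb{E}[y_{k+1}^2\mid\mathfrak{F}_{k\Delta}]=\mathbb{E}[\rho_k^2]\,y_k^2$, hence $\mathbb{E}|y_{k+1}|^2=\mathbb{E}[\rho_k^2]\,\mathbb{E}|y_k|^2$ with $\mathbb{E}[\rho_k^2]$ a deterministic constant.

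The second step is to evaluate $\mathbb{E}[\rho_k^2]$ by expanding the square in the numerator. Using $\mathbb{E}[\Delta w_k]=\mathbb{E}[(\Delta w_k)^3]=0$, $\mathbb{E}[(\Delta w_k)^2]=\Delta$ and $\mathbb{E}[(\Delta w_k)^4]=3\Delta^2$ (so that $\mathbb{E}[(|\Delta w_k|^2-\Delta)^2]=2\Delta^2$ and the cross term $\mathbb{E}[\Delta w_k(|\Delta w_k|^2-\Delta)]=0$), one finds that the numerator of $\mathbb{E}[\rho_k^2]$ equals $(1+(1-\theta)\mu\Delta)^2+c^2\Delta+\tfrac12 c^4\Delta^2$. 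Subtracting the denominator $(1-\theta\mu\Delta)^2$: the constant terms cancel, the first-order terms combine to $2\mu\Delta$, and the $\mu^2\Delta^2$ terms combine to $(1-2\theta)\mu^2\Delta^2$ since $(1-\theta)^2-\theta^2=1-2\theta$. Therefore
\[
\mathbb{E}[\rho_k^2]=1+\frac{\big(2\mu+c^2+(1-2\theta)\mu^2\Delta+\tfrac12 c^4\Delta\big)\Delta}{(1-\theta\mu\Delta)^2}=e^{-\gamma_\Delta\Delta},
\]
which is exactly the quantity appearing in the statement.

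For the last step, note that $\mathbb{E}[\rho_k^2]$ is the expectation of a nonnegative quantity, hence strictly positive, while condition \eqref{linearmuc} makes the bracket negative, so $0<\mathbb{E}[\rho_k^2]<1$ and $\gamma_\Delta>0$. Iterating $\mathbb{E}|y_{k+1}|^2=\mathbb{E}[\rho_k^2]\mathbb{E}|y_k|^2$ from $k=0$ and recalling $y_0=x_0$ yields $\mathbb{E}|y_k|^2=(\mathbb{E}[\rho_0^2])^k\mathbb{E}|x_0|^2=e^{-\gamma_\Delta k\Delta}\mathbb{E}|x_0|^2$, which is the claim. There is essentially no serious obstacle here; the only places needing a little care are the moment bookkeeping for the correction term $|\Delta w_k|^2-\Delta$ (in particular that it is uncorrelated with $\Delta w_k$) and checking that \eqref{linearmuc} indeed forces $\mathbb{E}[\rho_k^2]<1$, which is immediate from the displayed identity.
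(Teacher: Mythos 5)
Your proposal is correct and follows essentially the same route as the paper: rearrange the scheme into the one-step multiplicative form $y_{k+1}=\rho_k y_k$, use independence of $\Delta w_k$ from $\mathfrak{F}_{k\Delta}$ together with the moments $\mathbb{E}[\Delta w_k]=\mathbb{E}[(\Delta w_k)^3]=0$, $\mathbb{E}[(\Delta w_k)^2]=\Delta$, $\mathbb{E}[(\Delta w_k)^4]=3\Delta^2$ to compute $\mathbb{E}[\rho_k^2]$, and iterate. Your added remarks (that \eqref{linear msstabilty} forces $\mu<0$ so the scheme is well defined, and that \eqref{linearmuc} gives $0<\mathbb{E}[\rho_k^2]<1$ so $\gamma_\Delta>0$) are correct and slightly more careful than the paper's write-up.
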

\begin{proof}
Rearranging equation \eqref{linearMilstein} gives
$$
y_{k+1}=\frac{1}{1-\theta\mu\Delta}\Big(1+(1-\theta)\mu\Delta+c\Delta w_k+\frac{c^2}{2}[|\Delta w_k|^2-\Delta]\Big) y_k.
$$
Note that $y_k$ is $\mathfrak{F}_{k\Delta}$- measurable and $\Delta w_k$ is independent of $\mathfrak{F}_{k\Delta}$. We therefore have
   \begin{eqnarray*}
       \mathbb{E}|y_{k+1}|^2&=& \frac{[1+(1-\theta)\mu\Delta]^2+c^2\Delta+\frac{c^4}{2}\Delta^2}{(1-\theta\mu\Delta)^2} \mathbb{E}|y_k|^2\\
     &=&\Big[1+\frac{[2\mu+c^2+(1-2\theta)\mu^2\Delta+1/2c^4\Delta]\Delta}{(1-\theta\mu\Delta)^2}\Big] \mathbb{E}|y_k|^2\\
     &=&e^{-\gamma_{\Delta}(k+1)\Delta}\mathbb{E}|x_0|^2,
   \end{eqnarray*}
   where we used $\mathbb{E}\Delta w_k=0$, $\mathbb{E}|\Delta w_k|^2=\Delta$, $\mathbb{E}(\Delta w_k)^3=0$ and $\mathbb{E}|\Delta w_k|^4=3\Delta^2$.
\end{proof}
\begin{remark}
{\rm Theorem \ref{thlinear} shows that $(i)$ for $\theta\in[0,1/2]$, if $\Delta<\Delta^*:=\frac{-2\mu-c^2}{0.5c^4+(1-2\theta)\mu^2}$, Milstein scheme \eqref{linearMilstein} shares the exponential mean-square stability of the exact solution. $(ii)$ For $\theta\in(1/2,1]$, if $\mu^2<\frac{c^4}{2(2\theta-1)}$ (the diffusion term plays a crucial role), then for $\Delta<\Delta^*$, Milstein scheme \eqref{linearMilstein} is exponentially mean-square stable, and if $\mu^2\geq\frac{c^4}{2(2\theta-1)}$ (the drift term plays a crucial role), then Milstein scheme \eqref{linearMilstein} is exponentially mean-square stable unconditionally. These results are coincident with Theorem 2.1 in \cite{H2000b}. Theorem \ref{thlinear} also presents the exponential decay rate $\gamma_{\Delta}$ of theta-Milstein scheme.}
\end{remark}

\end{document}